\theoremstyle{theorem}
\newtheorem{theorem}{Theorem}[section]
\newtheorem{proposition}[theorem]{Proposition}
\newtheorem{lemma}[theorem]{Lemma}
\theoremstyle{definition}
\newtheorem{definition}[theorem]{Definition}
\newtheorem{example}[theorem]{Example}
\newtheorem{remark}[theorem]{Remark}
\def\End{\mathop{\rm End}\nolimits}
\def\ZZ{\mathbb{Z}}
\def\QQ{\mathbb{Q}}
\def\II{\mathbb{I}}
\newcommand{\plocal}{\mathbb{Z}_{(p)}}
\newcommand{\id}{\mathbf{1}}
\newcommand{\0}{\mathbf{0}}
\newcommand{\ihom}{\sharp}
\newcommand{\wprod}{\bigoplus}
\newcommand{\comp}{\,}
\def\im{\mathop{{\mathrm{im}}}}
\def\Tel{\mathop{{\mathrm{Tel}}}}
\def\Hom{{{\mathrm{Hom}}}}
\def\Ann{{{\mathrm{Ann}}}}
\def\ldiv{\mathbin{{\!\smallsetminus\!}}}
\def\rdiv{\mathbin{{\!\reflectbox{$\smallsetminus$}\!}}}
\begin{document}

%  \articletype{...}

  \author{Damir Franeti\v{c}}
  \address{Univerza v Ljubljani \\ Fakulteta za ra\v cunalni\v stvo in informatiko \\ Ve\v{c}na pot 113 \\ 1000 Ljubljana \\ Slovenia}
	\email{damir.franetic@fri.uni-lj.si}
  \author{Petar Pave\v{s}i\'{c}}
  \address{Univerza v Ljubljani \\ Fakulteta za matematiko in fiziko \\ Jadranska 19 \\ 1111 Ljubljana \\ Slovenia}
  \email{petar.pavesic@fmf.uni-lj.si}
  \thanks{The second author was partly supported by the Slovenian Research Agency grant P1-0292-0101, project No. J1-6721-0101.}
%  \runningauthor{D. Franeti\v{c}, P. Pave\v{s}i\'{c}}
  \title{Loop near-rings and unique decompositions of H-spaces}
%  \runningtitle{...}
%  \subtitle{...}
  \begin{abstract}
  	For every H-space $X$ the set of homotopy classes $[X,X]$ possesses a natural algebraic structure of a loop near-ring.
	Albeit one cannot say much about general loop near-rings, it turns out that those that arise from H-spaces are
	sufficiently close to rings to have a viable Krull--Schmidt type decomposition theory, which is then reflected
	into decomposition results of H-spaces. In the paper we develop the algebraic theory of local loop near-rings
	and derive an algebraic characterization of indecomposable and strongly indecomposable H-spaces. As a consequence,
	we obtain unique decomposition theorems for products of H-spaces. In particular, we are able to treat certain
	infinite products of H-spaces, thanks to a recent breakthrough in the Krull--Schmidt theory for infinite products.
	Finally, we show that indecomposable finite $p$-local H-spaces are automatically strongly indecomposable, which
	leads to an easy alternative proof of classical unique decomposition theorems of Wilkerson and Gray.
  \end{abstract}
  \keywords{H-space, near-ring, algebraic loop, idempotent, strongly indecomposable space, Krull--Schmidt--Remak--Azumaya theorem}
  \subjclass[2010]{55P45, 16Y30}
%  \communicated{...}
%  \dedication{...}
%  \received{...}
%  \accepted{...}
%  \journalname{Forum Math.}
%  \journalyear{...}
%  \journalvolume{..}
%  \journalissue{..}
%  \startpage{1}
%  \aop
%  \DOI{...}

\maketitle

\section*{Introduction}
In this paper we discuss relations between unique decomposition theorems in algebra and homotopy theory. 
Unique decomposition theorems usually state that sum or product decompositions (depending on the category),
whose factors are strongly indecomposable, are essentially unique. The standard algebraic example is the 
Krull--Schmidt--Remak--Azumaya theorem. In its modern form the theorem states that any decomposition of an $R$-module
into a direct sum of indecomposable modules is unique, provided that the endomorphism rings of the summands are local rings
(see \cite[theorem 2.12]{Facchini}). Modules with local endomorphism rings are said to be {\em strongly indecomposable} 
and they play a pivotal role in the study of cancellation and unique decomposition of modules. 
For example, every indecomposable module of finite length is strongly indecomposable which implies the classical Krull--Schmidt 
theorem (see \cite[lemma 2.21 and corollary 2.23]{Facchini}).

Similar results on unique decompositions have been obtained by P. Freyd \cite{Freyd} and H. Margolis \cite{Margolis} in stable homotopy category,  and 
by C. Wilkerson \cite{Wilkerson} and B. Gray \cite{Gray} in unstable homotopy category. However, even when their arguments closely parallel standard algebraic 
approach, the above authors choose to rely on specific properties of topological spaces, and avoid reference to purely algebraic results. 
In \cite{Pav} the second author considered factorizations in stable homotopy category from the algebraic viewpoint.
He first pointed out that the endomorphism rings of stable $p$-complete spectra are finite $\widehat{\mathbb{Z}}_p$-algebras, and those are
known to be semiperfect (see~\cite[example 23.3]{Lam}). The unique decomposition for finite $p$-complete spectra then follows immediately because 
the Krull--Schmidt--Remak--Azumaya theorem holds for modules whose endomorphism ring is semiperfect. 

The $p$-local case is more difficult, but Pave\v{s}i\'{c} was able to show (see \cite[theorem 2.1]{Pav}) that the endomorphism rings of finite $p$-local 
spectra are also semiperfect rings, which implies that finite $p$-local spectra decompose uniquely. 
The efficiency of the algebraic approach motivated our attempt to derive unique decomposition theorems  in unstable homotopy category. 
The leading idea is that the set $[X,X]$ of homotopy classes of self-maps of $X$ should play a role in the decomposition theory of H-spaces 
that is analogous to the role of endomorphism rings in the decomposition of modules. However, the situation is more complicated 
because of the fact that for a general H-space $X$ the set $[X,X]$ is not a ring but possesses only the much weaker structure of a loop near-ring.
Thus we were forced to develop first a notion of localness for loop near-rings, and then to characterize H-spaces that are
strongly indecomposable and appear as prime factors in unique decompositions.  
%Thus we were forced to develop first a decomposition theory for modules over such near-rings, and then to characterize H-spaces that are
%strongly indecomposable and appear as prime factors in unique decompositions. 
One of the important advantages of our approach is that there are
stronger versions of the Krull--Schmidt--Remak--Azumaya theorem that can be used to derive new decomposition theorems. In particular a recently proven result about 
unique decompositions of infinite products of modules led to new unique decomposition theorems for infinite products of H-spaces, cf. theorems \ref{thm:KSAprod} 
and \ref{thm:KSAwprod} below. 

The paper is organized as follows. In \S\ref{sect:LNRs} we study the set of homotopy classes $\End(X):=[X,X]$ for a connected H-space $X$ and show that it has 
the algebraic structure of a loop near-ring. Since this structure is not well-known, we then recall some basic facts about loop near-rings, 
generalize the concept of localness to loop near-rings and prove the most relevant results. More algebraic details are developed in a forthcoming paper. 
In \S\ref{sect:unique} we define strongly indecomposable H-spaces and show that a decomposition of an H-space as a product of strongly indecomposable 
factors is essentially unique. Finally, in \S\ref{sect:p-local} we prove that for finite, $p$-local H-spaces indecomposable implies strongly indecomposable, 
which in turn yields a unique decomposition theorem for $p$-local H-spaces. 

%We do not state the straightforward dualisations of these results for simply-connected coH-spaces.
Our approach can be almost directly dualized to simply-connected coH-spaces and connective CW-spectra. See remarks~\ref{rem:dual1} and~\ref{rem:dual2}.
All spaces under consideration are assumed to be pointed and to  have the homotopy type of a connected CW-complex. Maps and homotopies are 
base-point preserving, but we omit the base points from the notation and do not distinguish between a map and its homotopy class.  

\section{Loops and loop near-rings}
\label{sect:LNRs}
If $X$ is an H-space then the set $[X,X]$ of homotopy classes of self-maps admits two natural 
binary operations. The first - \emph{multiplication} - is induced by the composition $f g$ of 
maps $f,g \colon X \to X$: it is associative with the identity map $\id_X \colon X\to X$ acting as the neutral element, 
so the resulting algebraic structure $([X,X],\cdot)$ is a monoid. 
The second operation - \emph{addition} - is induced by the H-structure: it is in general neither commutative nor associative, 
and the constant map $\0_X \colon X\to X$ represents the neutral element.
If the H-space $X$ is connected, then $([X,X],+)$ is a so called (algebraic) loop (see \cite[theorem 1.3.1]{Zabrodsky}). 
Moreover, addition and composition on $[X,X]$ are related by right 
distributivity, i.e. $(f + g) h = f h + g h$ holds for all $f,g,h \colon X \to X$. The resulting algebraic structure 
$\End(X):=([X,X],+,\cdot)$ is called a 
\emph{(right) loop near-ring}, a structure that was first introduced by Ramakotaiah~\cite{Ramakotaiah}. We are not aware 
of any papers on loop near-rings that arise in topology. However, if $X$ is an H-group, then 
$\End(X)$ is a near-ring, and this stronger structure  has already been studied by Curjel~\cite{Curjel}, and more recently by 
Baues~\cite{Baues} and others. %To avoid repetition, from now on all H-spaces under consideration are assumed to be connected.

\subsection{Basic properties}
	The definition of a loop near-ring is similar to that of a ring but it lacks some important ingredients: addition is not 
	required to be commutative nor associative, and only one
	of the distributivity laws is present. The resulting structure is often very different from a ring but nevertheless, 
	a surprising number of concepts and facts from ring theory 
	can be suitably extended to this more general setting. We recall the definitions and state relevant results. 
	
	\begin{definition}
		\emph{An (Algebraic) loop} consists of a set $G$ equipped with a binary operation $+$ satisfying the following properties: 
			\begin{itemize}
				\item for every $a,b\in G$ the equations $a+x=b$ and $y+a=b$ have unique solutions $x,y \in G$; 
				\item there exists a two-sided zero, i.e. an element $0 \in G$ such that $0+a=a+0=a$ for all $a \in G$.
			\end{itemize}
	\end{definition}
	A loop is essentially a `non-associative group'. Existence of unique solutions to 
	equations implies that left and right cancellation laws hold in a loop. We can define the operations of {\em left} and 
	{\em right difference} $\ldiv$ and $\rdiv$ 
	where $x = a \ldiv b$ is the unique solution of the equation $a+x=b$, and $y= b \rdiv a$ is the unique solution of  the equation $y+a=b$. 
%	Of course, since the addition is not associative, the two operations are in general different.
	
	A {\em loop homomorphism} is a function $\phi \colon G \to H$ between loops $G$ and $H$ such that $\phi(a+b) = \phi(a)+\phi(b)$ 
	for all $a, b \in G$. 
	Since $\phi(0) = \phi(0)+\phi(0)$, the cancellation in $H$ gives $\phi(0)=0$. Similarly we get $\phi(a \ldiv b) = \phi(a) \ldiv \phi(b)$ 
	and $\phi(a \rdiv b) = \phi(a) \rdiv \phi(b)$. 

	As in the theory of groups we can define two kinds of subobjects,  subloops and normal subloops. A subset of a loop $G$ is a 
	{\em subloop} of $G$ if it is closed with respect to the addition and both difference
	operations. A direct definition of a normal subloop is more complicated, as we must take into account the non-associativity of 
	the addition: a subloop $K \le G$ is a {\em normal subloop} if for all $a,b\in G$ we have
	$$a+K = K+a,\ \ (a+b)+K = a+(b+K)\ \  \textrm{and}\ \  (K+a)+b = K+(a+b).$$
	We often use a slicker characterization: a subset of $G$ is a subloop if it is the image of some loop homomorphism; it is 
	a normal subloop if it is a kernel of some loop homomorphism.
	See~\cite[chapter IV]{Bruck} for a detailed treatment of these concepts.
	
	\begin{definition}
		A (right) {\em loop near-ring} $(N,+,\cdot)$ consists of a set $N$ with two binary operations $+$ and $\cdot$ such that:
		\begin{itemize}
			\item $(N,+)$ is a loop, 
			\item $(N, \cdot)$ is a monoid,
			\item multiplication $\cdot$ is  right distributive over addition $+$ and $n 0=0$ holds for every $n\in N$.  
		\end{itemize}
 		If $(N, +)$ is a group, $(N,+, \cdot)$ is a {\em near-ring}.
	\end{definition}
	We have slightly departed from the definition of a loop near-ring in \cite{Ramakotaiah} by requiring that there exists a neutral element for the multiplication, and that $N 0=0$. 
	This modification is motivated by the fact that $\End(X)$ is always unital and the constant map $0$ satisfies the property $0 n=n 0=0$. Note that $0n=0$ follows 
	from the right-distributivity and cancellation, while the symmetric relation $n 0=0$ in \cite{Ramakotaiah} characterizes the so-called zero-symmetric loop near-rings. 
	Let us also remark that if $X$ is a simply-connected coH-space then $\End(X)$ turns out to be a left loop near-ring.		
		
	A generic example of a right near-ring is the near-ring $M(G)$ of {\em all} functions $f \colon G \to G$ from a group $G$ to itself.
	Moreover, if $G$ is only a loop then $M(G)$ is a loop near-ring~\cite[example 1.2]{Ramakotaiah}. The following topological examples are more relevant to our discussion.

	\begin{example}
		\label{ex:nonc}
		To present an example of a near-ring whose additive group is not commutative we first need the following general observation. Given an H-space $X$ with the multiplication map $\mu$, 
		and an arbitrary space $Z$, the sum of maps $f,g\colon Z\to X$ is given by the composition $f+g:=\mu\comp (f\times g)\comp \Delta$ as in the diagram
		$$f+g: Z\xrightarrow{\Delta} Z\times Z 	\xrightarrow{f\times g} X\times X\xrightarrow{\mu} X$$
		This operation is commutative for all spaces $Z$ if and only if $p_1 + p_2 = p_2 + p_1$ holds for the two projections $p_1, p_2 \colon X\times X \to X$ 
		in $[X \times X, X]$. Indeed, one can directly check that $f+g = (p_1+p_2)\comp (f,g)$, and $g+f = (p_2+p_1)\comp (f,g)$, so if $p_1+p_2=p_2+p_1$ then 
		$f+g=g+f$ for every $Z$ and every $f,g\colon Z\to X$. 
		
		A well-known example of an H-structure that is not homotopy commutative is given by the quaternion multiplication on the $3$-sphere $S^3$~\cite{James1}. 
		By the above remark it follows that $[S^3 \times S^3, S^3]$ is a non-abelian group, hence $\End(S^3 \times S^3)$ is a (right) near-ring but not a ring. 
	\end{example}	
	\begin{example}
		Similarly as in the previous example one can show that, given an H-space $X$, the addition on $[Z,X]$ is associative for all spaces $Z$ if and only if 
		the relation $p_1 + (p_2+p_3) = (p_1+p_2)+p_3$ holds for the three projections in $[X \times X \times X, X]$. The octonion multiplication on the sphere $S^7$ is
		a familiar example of an H-structure that is not homotopy associative~\cite{James2}, so the addition in 
		$[S^7\times S^7\times S^7,S^7]$ is not associative. We conclude that
		$\End(S^7 \times S^7 \times S^7)$ is not a near-ring but only a (right) loop near-ring.
	\end{example}	
	\begin{example}
		Our final example is a left loop near-ring induced by a coH-space structure. 
		Every element $\gamma \colon S^6 \to S^3$ of order $3$ in the group $\pi_6(S^3) \cong \ZZ/12$ is a coH-map, therefore its mapping cone 
		$C := S^3 \cup_\gamma e^7$ is a coH-space. Ganea~\cite[proposition 4.1]{Ganea} has proved that $C$ does not admit any associative 
		coH-structures, so in particular the addition induced by the coH-structure in $[C,C\vee C\vee C]$ is not associative. It follows that 
		$\End(C \vee C \vee C)$ is a (left) loop near-ring but not a near-ring.
	\end{example}
	
	\subsection{Local loop near-rings}
	The crucial ingredient in the proof of the Krull--Schmidt--Remak--Azumaya theorem is the assumption that there is a factorization of the given
	module as a direct sum of factors whose endomorphism rings are local. In order to extend this approach to factorizations of H-spaces we 
	need a suitable definition of local loop near-rings. 
	Local near-rings were introduced by Maxson in~\cite{Maxson}. We use the characterization~\cite[theorem 2.8]{Maxson} to extend this 
	concept to loop near-rings.
	A subloop $I \le N$ is said to be an \emph{$N$-subloop} if $N I \subseteq I$. The role of $N$-subloops in the theory of loop near-rings is 
	analogous to that of ideals in rings.
	
	\begin{definition}
		A loop near-ring $N$ is {\em local} if it has a unique maximal $N$-subloop $J \lneq N$.
	\end{definition}
	
	Let $U(N)$ denote the \emph{group of units} of the loop near-ring $N$, that is to say, the group of invertible elements of the monoid 
	$(N,\cdot)$. A function $\phi \colon N \to N'$ is a {\em homomorphism} of loop near-rings if $\phi(1) = 1$,  
	$\phi(m+n) = \phi(m) + \phi(n)$, and $\phi(mn) = \phi(m) \phi(n)$ hold for all $m,n \in N$. Clearly $\phi(0)=0$, and, if $u\in U(N)$, then 
	$\phi(u)\in U(N')$. A homomorphism is said to be \emph{unit-reflecting} if the converse holds, i.e. if $\phi(n)\in U(N')$ implies 
	$n\in U(N)$. 	
	One of the most remarkable properties of loop near-rings that arise in homotopy theory 	is that they come equipped with a unit-reflecting  
	homomorphism into a ring (namely, with the representation into endomorphism of homotopy or homology groups,
	that is unit-reflecting as a consequence of the Whitehead theorem). It is important to observe that the image of such 
	a homomorphism is always 
	a subring of the codomain. The main properties of local loop-near rings are collected in the following theorem. 
%	The proofs are completely algebraic in nature and will appear in~\cite{InPreparation}.
		
	\begin{theorem}	\label{thm local}	\label{thm:local_lnr} \
		\begin{enumerate}[(i)]
			\item In a local loop-near ring $N$ the only idempotents are $0$ and $1$.
			\item %Assume that there is a non-trivial homomorphism from the loop near-ring $N$ to a ring $R$. 
				A loop near-ring $N$ is local if and only if $N \setminus U(N)$ is an $N$-subloop in $N$. 
				Moreover, in this case $N \setminus U(N)$ is the unique maximal $N$-subloop.
%				A loop near-ring $N$ is local if and only if $N \setminus U(N)$ is an additive subloop in $N$. 
%				Moreover, in this case $N \setminus U(N)$ is the unique maximal $N$-subloop.
			\item Let $\phi\colon N \to R$ be a non-trivial and unit-reflecting homomorphism  from a loop-near ring $N$ to a ring $R$. If $N$ 
				is local then $\im \phi$ is a local subring of $R$. Conversely, if $R$ is local, then $N$ is a local loop near-ring.
		\end{enumerate}
	\end{theorem}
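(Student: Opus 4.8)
The plan is to prove the three parts in the order (i), (ii), (iii), with (ii) invoking (i) and (iii) invoking (ii). Two preliminary observations will be used throughout. \emph{(a)} For any $a\in N$ the left multiple $Na=\{na:n\in N\}$ is an $N$-subloop: right distributivity gives $n_1a+n_2a=(n_1+n_2)a$, and together with cancellation it gives $n_1a\ldiv n_2a=(n_1\ldiv n_2)a$ and $n_1a\rdiv n_2a=(n_1\rdiv n_2)a$, while associativity of $\cdot$ gives $N(Na)\subseteq Na$. \emph{(b)} Every proper $N$-subloop lies in a maximal one (Zorn's lemma: the union of a chain of proper $N$-subloops is again an $N$-subloop and still omits $1$), and a proper $N$-subloop $I$ contains no unit, since $u\in I\cap U(N)$ forces $1=u^{-1}u\in NI\subseteq I$; hence in a local $N$ every proper $N$-subloop sits inside the unique maximal $N$-subloop $J$. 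For (i), let $e=e^2$ with $e\neq 0,1$. From $1=ne$ we would get $e=(ne)e=ne^2=ne=1$, so $Ne$ is a proper $N$-subloop and $e\in Ne\subseteq J$. Put $f:=1\rdiv e$, i.e.\ $f+e=1$; right-multiplying by $e$ and cancelling yields $fe=0$, and from $1=mf$ we would get $e=(mf)e=m(fe)=m0=0$, so $Nf$ is proper too and $f\in Nf\subseteq J$. Then $1=f+e\in J$ contradicts $J\lneq N$; hence $e\in\{0,1\}$.

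For (ii), the implication $\Leftarrow$ is immediate: if $N\setminus U(N)$ is an $N$-subloop then it is proper (it omits $1$) and, by observation (b), it contains every proper $N$-subloop, so it is the unique maximal $N$-subloop and $N$ is local. For $\Rightarrow$, let $J$ be the unique maximal $N$-subloop; then $J\subseteq N\setminus U(N)$ by (b). Conversely take $a\notin U(N)$. If $Na\neq N$, then $Na$ is a proper $N$-subloop and $a\in Na\subseteq J$. If $Na=N$, pick $b$ with $ba=1$; then $ab$ is idempotent, since $(ab)(ab)=a(ba)b=ab$, so $ab\in\{0,1\}$ by part (i). Now $ab=1$ would make $a$ a unit, contrary to assumption, so $ab=0$ and therefore $a=a(ba)=(ab)a=0\cdot a=0\in J$. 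Thus $N\setminus U(N)=J$ is an $N$-subloop, and it is the unique maximal one.

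For (iii), recall from the discussion above that $\im\phi$ is a subring of $R$, that $\phi(1)=1$, and that $\phi$ commutes with $\ldiv$ and $\rdiv$; non-triviality of $\phi$ means $1\neq 0$ in $\im\phi$. Assume $N$ is local. I will check the well-known characterization that a ring $S$ with $1\neq 0$ is local iff for each $x\in S$ one of $x$, $1-x$ is a unit of $S$. Let $x=\phi(n)\in\im\phi$. By (ii), $N\setminus U(N)$ is an $N$-subloop, so at least one of $n$, $1\rdiv n$ lies in $U(N)$ — otherwise both are in $N\setminus U(N)$ and then so is $1=(1\rdiv n)+n$. If $n\in U(N)$ then $\phi(n^{-1})$ inverts $x$ inside $\im\phi$; if $1\rdiv n\in U(N)$ then $\phi(1\rdiv n)=1\rdiv x=1-x$ is a unit of $\im\phi$. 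Hence $\im\phi$ is local. Conversely, suppose $R$ is local with maximal ideal $\m=R\setminus U(R)$, and set $J:=\phi^{-1}(\m)$. Since $\phi$ is a loop homomorphism, $J$ is a subloop; since $\m$ is an ideal and $\phi$ is multiplicative, $NJ\subseteq J$; and $J$ is proper because $\phi(1)=1\notin\m$. Unit-reflectivity yields $J=N\setminus U(N)$: for $n\in N$ one has $n\in J\iff\phi(n)\notin U(R)\iff n\notin U(N)$, where the last equivalence is clear from left to right and is unit-reflectivity from right to left. By (ii), $N$ is local.

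The only genuinely delicate point is the inclusion $N\setminus U(N)\subseteq J$ in (ii): since a loop near-ring has no workable notion of a right ideal, an element that is merely one-sidedly invertible escapes the naive ``$Na\neq N$'' argument, and it is precisely here that part (i) together with the identity $0\cdot a=0$ is needed. Everything else is bookkeeping with the loop operations, the recurring device being to push $\ldiv$ and $\rdiv$ through a right multiplication using right distributivity and cancellation; the Zorn's lemma step in observation (b) is routine, since a chain of proper $N$-subloops has proper union.
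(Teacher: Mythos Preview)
Your proof is correct. The overall scaffolding (Zorn's lemma, the $N$-subloops $Na$, proper $N$-subloops avoid units) matches the paper, but the execution diverges in two places.

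In part (ii) the paper does \emph{not} invoke (i); instead, for $u\in N\setminus J$ it finds $k$ with $ku=1$ and then shows directly, via the element $1\rdiv uk$, that $k\in N\setminus J$ as well, whence $u$ is two-sided invertible. Your route---observing that $ab$ is idempotent and applying (i)---is shorter and makes the logical dependence $(\text{i})\Rightarrow(\text{ii})$ explicit; the paper's route has the virtue of making (ii) self-contained.

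In the forward direction of (iii) the paper argues structurally: $\ker\phi$ is an ideal, hence an $N$-subloop contained in $J$, so $\im\phi\cong N/\ker\phi$ has $J/\ker\phi$ as its unique maximal left ideal. You instead verify the ring-theoretic criterion ``$x$ or $1-x$ is a unit'' by pulling back to $N$ and using that $N\setminus U(N)$ is closed under $+$. Both are standard; the paper's approach yields the slightly stronger ``quotient of local is local'' along the way, while yours avoids any discussion of ideals or quotients in loop near-rings. Your part (i) is essentially the paper's argument specialized to the single element $f=1\rdiv e\in\Ann(e)$, which is all that is needed.
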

%	As for unital rings, it follows from Zorn's 
%	lemma that every proper $N$-subloop in $N$ is contained in a maximal $N$-subloop~\cite[lemma 2.7]{Maxson}. There are several possible 
%	generalizations of the Jacobson radical to loop near-rings. The one we will need is $J_2(N)$ as defined in~\cite[definition 3.17]{Ramakotaiah}. 
%	In case $R$ is a ring, it follows from~\cite[lemma 3.10]{Ramakotaiah} and~\cite[proposition 5.3(c)]{Pilz} that $J_2(R)$ coincides with $J(R)$, 
%	the Jacobson radical of $R$. If $\phi \colon N \to M$ is a loop near-ring epimorphism, then $J_2(N) \subseteq \phi^{-1}(J_2(M))$. The proof is 
%	identical to the near-ring case, see~\cite[proposition 5.15(c)]{Pilz}.
	\begin{proof}
		(i) Let $e = e^2 \in N$ be an idempotent and write an element $n \in N$ as $n = y + ne$. Multiplying this equation by $e$ from the right we get 
		$ne = (y+ne)e = ye + ne$, hence $ye = 0$. Denote by $\Ann(e)$ the {\em annihilator} of $e$, i.e. the subset of all $y \in N$ such that $ye = 0$. 
		We have just seen that $N = \Ann(e)+Ne$. %Moreover, if $ne \in \Ann(e) \cap Ne$, then $ne = ne^2 = 0$, hence $\Ann(e) \cap Ne = 0$. 
		Both subsets, $\Ann(e)$ and $Ne$, are $N$-subloops in $N$ (this is immediate for $Ne$, for $\Ann(e)$ use the fact that $N$ is zero-symmetric). 
		Similarly as for unital rings, Zorn lemma implies that every proper $N$-subloop in $N$ is contained in a maximal 
		$N$-subloop, see~\cite[lemma 2.7]{Maxson}. Clearly, $\Ann(e)$ and $Ne$ cannot both be contained in the unique maximal $N$-subloop 
		$J \lneq N$. Therefore, either $\Ann(e) = N$ or $Ne = N$, which means that either $e = 0$ or $e = 1$.
		
		(ii) Let $N$ be local and let $J \lneq N$ be the unique maximal $N$-subloop. We claim that every $u \in N \setminus J$ has a left inverse. 
		In fact, if $Nu \neq N$, then the $N$-subloop $Nu$ is contained in $J$, hence $u \in J$. Therefore, for $u \in N \setminus J$ we have $Nu = N$, 
		in particular $ku = 1$ for some $k \in N$. Observe that $k \in N \setminus J$ as well. In fact, we have the following chain of 
		implications
		\begin{align*}
			& (1 \rdiv uk)u = u \rdiv uku = u \rdiv u = 0 \\
			\Rightarrow \quad & 1 \rdiv uk \ \textrm{ is not left invertible} \\
			\Rightarrow \quad & uk \in N \setminus J \\
			\Rightarrow \quad & k \in N \setminus J \textrm{. }
		\end{align*}
%		$(1 \rdiv uk)u = u \rdiv uku = u \rdiv u = 0$, $1 \rdiv uk$ is not left invertible. We conclude that $1 \rdiv uk \in J$ and 
%		$uk \in N \setminus J$, hence $k \in N \setminus J$ (note that $k \in J$ implies $uk \in J$).
%		Consider the element $1 \rdiv uk \in N$. Since $(1 \rdiv uk)u = u \rdiv u = 0$, 
%		$1 \rdiv uk$ cannot have a left inverse. Hence, $1 \rdiv uk \in J$, which means $uk \in N \setminus 
%		Suppose $k \in J$. Then $uk \in J$, hence $1 \rdiv uk \in N \setminus J$, since $1 \in N \setminus J$. As 
%		before $1 \rdiv uk$ has a left inverse, say $v$, and $u = v(1 \rdiv uk)u = v(u \rdiv u) = v0 = 0$, which is a contradiction. 
		We conclude $N \setminus J \subseteq U(N)$. The reverse inclusion 
		$U(N) \subseteq N \setminus J$ is obvious, hence $J = N \setminus U(N)$, which is an $N$-subloop. 
		
		For the reverse implication assume that $N \setminus U(N)$ is an $N$-subloop. 
%		For a non-unit $k \in N \setminus U(N)$, $nk$ is again a non-unit, hence $N \setminus U(N)$ is in fact an $N$-subloop. 
		Since every proper $N$-subloop $I \lneq N$ is contained in $N \setminus U(N)$, $N \setminus U(N)$ is clearly the unique maximal $N$-subloop.
%		It is the reverse implication that requires the existence of a non-trivial homomorphism $\phi \colon N \to R$ to a ring $R$. Let $A := \im \phi$. 
%		This $A$ is a non-trivial subring of $R$ and, as $J(A) \neq A$, $\phi^{-1}(J(A)) \neq N$. Since $J_2(N) \subseteq \phi^{-1}(J(A))$, we have 
%		that $J_2(N) \neq N$.
		
		(iii) Call a subset $K \subseteq N$ an {\em ideal} if $K$ is the kernel of some loop near-ring homomorphism $\psi \colon N \to N'$. Every ideal 
		$K$ is also an $N$-subloop. If $N$ is local with unique maximal $N$-subloop $J$, then $K \subseteq J$ and the quotient $N/K \cong \im \psi$ 
		has $J/K$ as the unique maximal $(N/K)$-subloop. So, in particular, $\im \phi$ is a local ring. 
		
		For the reverse implication, since $\phi$ is unit-reflecting, we have $\phi^{-1}(R \setminus U(R)) = N \setminus U(N)$. 
		As $R$ is a local ring $R \setminus U(R)$ is a left ideal of $R$ by~\cite[theorem 19.1]{Lam}, therefore its preimage $N \setminus U(N)$ is an 
		$N$-subloop of $N$, so by (ii) $N$ is local.
	\end{proof}
\section{Uniqueness of decompositions of H-spaces}
\label{sect:unique}
	\label{sec:uniqueness}
	The classical Krull--Schmidt--Remak--Azumaya theorem says that a factorization of a module as a direct sum of strongly indecomposable 
	modules is essentially unique. 
	In this section we use the theory of loop near-rings to prove an analogous result for product decompositions of H-spaces. 
	
	Given a space $X$ every self map $f\colon X\to X$ induces endomorphisms $\pi_k(f)\in\End(\pi_k(X))$ of the homotopy groups of $X$ 
	that can be combined to obtain the following
	function
	\[
		\beta_X\colon \End(X)\to  \prod_{k=1}^\infty \End(\pi_k(X)), \quad 
			f \mapsto f_\ihom =(\pi_1(f), \pi_2(f), \pi_3(f), \ldots) \textrm{. }
	\]
	A loop near-ring homomorphism $\phi \colon N \to M$ is {\em idempotent-lifting} if, for every idempotent of the form $\phi(n) \in M$ 
	there is an idempotent $e \in N$ such that $\phi(e) = \phi(n)$.
	\begin{proposition} \label{prop beta}
		If $X$ is an H-space then $\beta_X$ is a unit-reflecting and idempotent-lifting homomorphism from a loop near-ring to a ring.
	\end{proposition}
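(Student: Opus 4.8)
The plan is to verify, in turn, that $\beta_X$ is a homomorphism of loop near-rings, that it is unit-reflecting, and that it is idempotent-lifting; the last property is where the content lies. For the homomorphism assertion, multiplicativity $\beta_X(fg)=\beta_X(f)\beta_X(g)$ together with $\beta_X(\id_X)=\id$ and $\beta_X(\0_X)=0$ are immediate from functoriality of $\pi_k$. For additivity I unwind the definition $f+g=\mu\comp(f\times g)\comp\Delta$ of the sum in $\End(X)$: precomposing with a representative $\alpha\colon S^k\to X$ and using naturality of the diagonal gives $(f+g)\comp\alpha=\mu\comp\bigl((f\alpha)\times(g\alpha)\bigr)\comp\Delta_{S^k}$, which is exactly the H-space sum of $f\alpha$ and $g\alpha$ inside $[S^k,X]=\pi_k(X)$. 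By the Eckmann--Hilton argument this sum coincides, for $k\ge 1$, with the ordinary abelian group operation of $\pi_k(X)$ — here one uses that $\pi_1(X)$ is abelian, since $X$ is an H-space — whence $\pi_k(f+g)=\pi_k(f)+\pi_k(g)$ in the ring $\End(\pi_k(X))$. So $\beta_X$ is a homomorphism into the ring $\prod_k\End(\pi_k(X))$, and being such it also respects differences: $\beta_X(a\rdiv b)=\beta_X(a)-\beta_X(b)$.

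For unit-reflection: if $\beta_X(f)$ is invertible then every $\pi_k(f)$ is an automorphism, so $f$ is a weak homotopy equivalence (recall $X$ is connected), hence a homotopy equivalence by Whitehead's theorem, since $X$ has the homotopy type of a CW-complex. Thus $f$ is a unit of the monoid $([X,X],\cdot)=\End(X)$.

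For idempotent-lifting, suppose $\beta_X(f)$ is idempotent and put $p_k:=\pi_k(f)$, so $p_k^2=p_k$ for all $k$. The device is to exploit the loop structure of $[X,X]$: let $g:=\id_X\rdiv f$ be the unique self-map with $g+f=\id_X$; by the previous paragraph $\pi_k(g)=\id-p_k$, the complementary idempotent. Form the mapping telescopes $T_+:=\Tel\bigl(X\xrightarrow{f}X\xrightarrow{f}\cdots\bigr)$ and $T_-:=\Tel\bigl(X\xrightarrow{g}X\xrightarrow{g}\cdots\bigr)$. Since $\pi_k$ commutes with mapping telescopes and $\colim\bigl(A\xrightarrow{p}A\xrightarrow{p}\cdots\bigr)\cong\im p$ for any idempotent $p$, one gets $\pi_k(T_+)\cong\im p_k$ and $\pi_k(T_-)\cong\im(\id-p_k)=\ker p_k$, while the canonical maps $\iota_+\colon X\to T_+$ and $\iota_-\colon X\to T_-$ from the initial stages induce the corestrictions $a\mapsto p_k(a)$ and $a\mapsto(\id-p_k)(a)$. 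Hence $(\iota_+,\iota_-)\colon X\to T_+\times T_-$ induces on each $\pi_k$ the map $a\mapsto\bigl(p_k(a),(\id-p_k)(a)\bigr)$, which is an isomorphism $\pi_k(X)\xrightarrow{\sim}\im p_k\oplus\ker p_k$ with inverse $(y,z)\mapsto y+z$. By Whitehead's theorem $(\iota_+,\iota_-)$ is a homotopy equivalence, so $X\simeq T_+\times T_-$. Transporting the evident idempotent $T_+\times T_-\to T_+\hookrightarrow T_+\times T_-$ back along this equivalence produces a genuine idempotent $e\in\End(X)$, and chasing the isomorphisms above shows $\pi_k(e)=p_k$ for every $k$, i.e. $\beta_X(e)=\beta_X(f)$.

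The step I expect to be the real obstacle is idempotent-lifting, and the point on which it turns is that $\beta_X$ respects $\rdiv$ — that is, the Eckmann--Hilton identification in the first step — since this is what allows $g=\id_X\rdiv f$ to realize the complementary idempotent and thereby split $X$ as $T_+\times T_-$; it is precisely here that the loop structure on $[X,X]$ (available because $X$ is connected) is genuinely used, rather than merely the monoid structure. The remaining work in the third step, namely checking that the telescopes have the stated homotopy groups and that the transported idempotent induces exactly $p_\bullet$ rather than some conjugate of it, is routine bookkeeping, and no obstruction-theoretic input should be needed once the product splitting is available.
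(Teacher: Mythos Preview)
Your proof is correct and follows essentially the same route as the paper: additivity via the H-structure on $\pi_k$, unit-reflection via Whitehead, and idempotent-lifting by splitting $X$ as $\Tel(f)\times\Tel(\id_X\rdiv f)$ and reading off the idempotent from the first factor. The only difference is that the paper outsources the telescope splitting to \cite[proposition~3.2]{FraPav}, whereas you spell out the homotopy-group computation directly; this is an elaboration of the same idea rather than a different approach.
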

	\begin{proof}
		We already know that $\End(X)$ is a loop near-ring. All homotopy groups of an H-space are abelian 
		so $\End(\pi_k(X))$ are rings, hence the codomain of $\beta_X$ is a direct product of rings. Moreover, $\beta_X$ is a 
		homomorphism of loop near-rings because $(f+g)_\ihom = f_\ihom + g_\ihom$ holds for every H-space $X$, 
		while $(f g)_\ihom = f_\ihom g_\ihom$ by functoriality.   
		To see that $\beta_X$ is unit-reflecting let  $f\colon X\to X$ be such that the induced homomorphism $\beta_X(f)$ is 
		an isomorphism. Then, by the Whitehead theorem, $f$ is a homotopy equivalence, i.e. a unit element of $\End(X)$.
		Finally, if $\beta_X(f)$ is an idempotent, then by~\cite[proposition 3.2]{FraPav} there is a decomposition of $X$ 
		into a product of telescopes 
		$\Tel(f) \times \Tel(f \ldiv \id_X)$. The first factor in this decomposition determines an idempotent $e \colon X \to \Tel(f) \to X$ 
		in $\End(X)$ such that 
		$\beta_X(e)=\beta_X(f)$, proving that $\beta_X$ is idempotent-lifting. 
	\end{proof}
	\begin{remark}\label{rem:dual1}
		All results of this section are easily dualized to simply-connected coH-spaces $X$. As in~\cite{FraPav} one replaces $\pi_*(X)$ 
		with singular homology groups $H_*(X)$ and the homomorphism $\beta_X$ with the homomorphism
		\[
			\alpha_X\colon \End(X)\to  \prod_{k=1}^\infty \End(H_k(X)), \quad 
				f \mapsto f_* =(H_1(f), H_2(f), H_3(f), \ldots) \textrm{. }
		\]
%		This $\alpha_X$ is a homomorphism of loop near-rings with properties equivalent to those of $\beta_X$ in case of H-spaces. 
		Product and weak product decompositions of H-spaces are replaced by wedge decompositions of coH-spaces, hence, theorems~\ref{thm:KSAprod} 
		and~\ref{thm:KSAwprod} below are replaced by one dual theorem. Moreover, if one replaces the coH-space $X$ by a connective CW-spectrum $X$, 
		the dualized argument remains the same. Observe that even though $\End(X)$ is a genuine ring in case of CW-spectra, its image under 
		$\alpha_X$ can be easier to understand. 
	\end{remark}
	
	Every decomposition of an H-space as a product of two non-contractible spaces $X \simeq Y\times Z$ determines a non-trivial idempotent 
	$e = jp \colon X \to Y \hookrightarrow X$ 
	in $\End(X)$, and conversely, every non-trivial idempotent $f\in \End(X)$ gives rise to a non-trivial product decomposition 
	$X\simeq\Tel(f) \times \Tel(f \ldiv \id_X)$. 
	
	\begin{definition}
		An H-space $X$ is \emph{indecomposable} if $\0_X$ and $\id_X$ are the only idempotents in $\End(X)$. Moreover $X$ 
		is \emph{strongly indecomposable} if $\End(X)$ is a local loop near-ring. 
	\end{definition}	
		
	By theorem \ref{thm local} every strongly indecomposable H-space is indecomposable. The converse is not true: e.g. 
	$\End(S^1)=\End(S^3)=\End(S^7)\cong\ZZ$, so $S^1,S^3$ and $S^7$ are indecomposable H-spaces but they are not strongly indecomposable
	since the ring of integers is not local. The main result of this paper is that the distinction between indecomposable and 
	strongly indecomposable disappears when one considers finite $p$-local spaces.
	\begin{example}
%		Adams and Kuhn~\cite{Adams-Kuhn} call a space or a spectrum $X$ {\em atomic} if `a map $f \colon X \to X$ may be proved to be an 
%		equivalence by a simple, computable test applied in one dimension'. They mainly consider $p$-complete spaces $X$ for which a map 
%		$f \colon X \to X$ is either an equivalence or topologically nilpotent. All such H-spaces are strongly indecomposable, since all 
%		topologically nilpotent maps generate the unique maximal ideal in $\End(X)$.
%		
%		Baker and May, see~\cite[definition 1.1]{BakerMay}, deal with the $p$-local case and make the definition of atomicity 
%		more specific. In their sense, a $p$-local CW-complex or spectrum $X$ is called {\em atomic} 
%		if its first nontrivial homotopy group, say $\pi_{k_0}(X)$, is a cyclic $\plocal$-module, and a self map $f \colon X \to X$ is 
%		a homotopy equivalence whenever $f_\ihom \colon \pi_{k_0}(X) \to \pi_{k_0}(X)$ is an isomorphism. Note that in this case 
%		$\End(\pi_{k_0}(X))$ is a local ring, and the loop near-ring homomorphism $\pi_{k_0} \colon \End(X) \to \End(\pi_{k_0}(X))$ 
%		is unit-reflecting. Hence, every atomic complex $X$ in this sense is also strongly indecomposable by theorem~\ref{thm:local_lnr}. 
		In the sense of Baker and May, see~\cite[definition 1.1]{BakerMay}, a $p$-local CW-complex or spectrum $X$ is called {\em atomic} 
		if its first nontrivial homotopy group, say $\pi_{k_0}(X)$, is a cyclic $\plocal$-module, and a self map $f \colon X \to X$ is 
		a homotopy equivalence whenever $f_\ihom \colon \pi_{k_0}(X) \to \pi_{k_0}(X)$ is an isomorphism. This notion of atomicity also appeared 
		earlier in~\cite[\S 4]{CMN}. Note that in this case 
		$\End(\pi_{k_0}(X))$ is a local ring, and the loop near-ring homomorphism $\pi_{k_0} \colon \End(X) \to \End(\pi_{k_0}(X))$ 
		is unit-reflecting. Hence, every atomic complex $X$ in this sense is also strongly indecomposable by theorem~\ref{thm:local_lnr}. 
		
		In particular, the spectra $BP$, $BP\langle n \rangle$ are atomic at all primes~\cite[examples 5.1, 5.4]{BakerMay}, suspensions 
		$\Sigma \mathbb{C}\mathrm{P}^\infty$, $\Sigma\mathbb{H}\mathrm{P}^\infty$ are atomic at the prime $2$ 
		by~\cite[propositions 4.5, 5.9]{BakerMay}. Moreover, at 
		the prime $p$, there is a decomposition~\cite[proposition 2.2]{McGibbon}
		\begin{equation}
			\Sigma \mathbb{C}\mathrm{P}^\infty_{(p)} \simeq W_1 \vee \cdots \vee W_{p-1} \textrm{, } \label{decSCP}
		\end{equation}
		where the nonzero integral homology groups of $W_r$ are $\widetilde{H}_{2k+1}(W_r) = \plocal$ for $k \equiv r \mod (p-1)$. 
		By~\cite[proposition 5.9]{BakerMay} the suspension spectra $\Sigma^\infty W_r$ are atomic, hence strongly indecomposable by dual 
		reasoning in view of remark~\ref{rem:dual1}.
		The loop near-ring homomorphism $\Sigma^\infty \colon \End(W_r) \to \End(\Sigma^\infty W_r)$ is unit-reflecting, so the coH-spaces 
		$W_r$ are also strongly indecomposable. Therefore, the $\vee$-decomposition~(\ref{decSCP}) is unique by the dual 
		of theorem~\ref{thm:KSAfin} below.
	\end{example}
	
	\begin{lemma} \label{non-trivial idempotent}
		Let $X$ be an H-space and let $f \in \End(X)$ be an idempotent. Then $f = \0_X$ if and only if $\beta_X(f) = 0$.
	\end{lemma}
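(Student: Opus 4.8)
The plan is to settle the trivial direction in a line and then to handle the converse with mapping telescopes, essentially re‑using the machinery behind Proposition~\ref{prop beta}.

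If $f=\0_X$ then $f$ is the constant map, so it induces the zero homomorphism on each homotopy group and $\beta_X(f)=0$; equivalently, $\beta_X$ is a loop near-ring homomorphism and hence carries the additive neutral element $\0_X$ to $0$. For the converse, suppose $f\in\End(X)$ is idempotent and $\beta_X(f)=0$, that is, $\pi_k(f)=0$ for all $k\ge 1$. First I would look at the mapping telescope $\Tel(f)=\hocolim\bigl(X\xrightarrow{f}X\xrightarrow{f}\cdots\bigr)$ and use that its homotopy groups are the colimits of the homotopy groups of the stages, so that $\pi_k(\Tel(f))$ is the colimit of $\pi_k(X)\xrightarrow{\pi_k(f)}\pi_k(X)\xrightarrow{\pi_k(f)}\cdots$. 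Since every transition map is zero, each element of each stage dies at the next one and the colimit vanishes. Thus $\Tel(f)$ is a connected CW-complex with trivial homotopy groups, hence contractible by the Whitehead theorem. Next I would use that a genuine idempotent factors through its telescope: because $f^2\simeq f$ there are maps $q\colon X\to\Tel(f)$ and $j\colon\Tel(f)\to X$ with $j\circ q\simeq f$ — this is the splitting underlying the decomposition $X\simeq\Tel(f)\times\Tel(f\ldiv\id_X)$ of \cite[proposition 3.2]{FraPav}, where $q$ and $j$ are the projection onto and the inclusion of the first factor. As $\Tel(f)$ is contractible, $j\circ q$ factors through a point, so $f\simeq j\circ q=\0_X$, as required.

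The one place that needs care is this final step: one must make sure that it is the \emph{given} idempotent $f$ that is recaptured as the composite through $\Tel(f)$, rather than merely some self-map sharing its image under $\beta_X$ (as is the case for the idempotent $e$ produced in the proof of Proposition~\ref{prop beta} from an element $f$ whose image alone is idempotent). This is exactly where the hypothesis that $f$ itself be idempotent enters, and it is what one extracts from \cite[proposition 3.2]{FraPav}; the homotopy-coherence bookkeeping needed to build the retraction $j$ is taken care of there. The remaining ingredients — the colimit description of $\pi_k(\Tel(f))$, the vanishing of a colimit of zero maps, and the appeal to the Whitehead theorem — are entirely routine.
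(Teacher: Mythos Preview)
Your argument is correct, but it is quite different from the paper's own proof, which is purely algebraic and considerably shorter. The paper chooses $g\in\End(X)$ with $g+f=\id_X$ (possible because $(\End(X),+)$ is a loop); then $\beta_X(g)=1$, so by the unit-reflecting property of $\beta_X$ the map $g$ is a homotopy equivalence. Right distributivity gives $f=(g+f)f=gf+f$, cancellation in the loop yields $gf=\0_X$, and invertibility of $g$ forces $f=\0_X$. No telescopes, no Whitehead, no idempotent splitting are invoked.

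Your route goes through the topology instead: you show $\Tel(f)$ is contractible and then use the splitting of the idempotent $f$ through $\Tel(f)$ to conclude $f\simeq\0_X$. This is valid, and you are right to flag that the crucial point is recovering $f$ itself (not merely some $e$ with $\beta_X(e)=\beta_X(f)$) as the composite $j\circ q$; this is exactly where the hypothesis $f^2\simeq f$ is used, and it is the standard statement that homotopy idempotents split via their telescopes. What the paper's approach buys is independence from that machinery: the entire proof takes place inside the loop near-ring $\End(X)$, using only right distributivity, cancellation, and the fact that $\beta_X$ reflects units. What your approach buys is a more geometric picture (the ``image'' of $f$ is visibly a point), at the cost of importing the idempotent-splitting result from \cite{FraPav}.
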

	\begin{proof}
		It is the `if' part that requires a proof. Assume $\beta_X(f) = 0$ and 
		let $g$ solve the equation $g + f = \id_X$ in $\End(X)$. Then $\beta_X(g) = 1$, so $g$ is a homotopy equivalence by 
		proposition~\ref{prop beta}. Using right distributivity in $\End(X)$ we obtain $f = (g+f)f = gf + f$. Canceling $f$ we get $gf = \0_X$, 
		hence $f = \0_X$, since $g$ is a homotopy equivalence. 
	\end{proof}
%	\begin{lemma} \label{non-trivial idempotent}
%		Let $X$ be an H-space. An idempotent $f \in \End(X)$ is non-trivial if and only if $\beta_X(f)$ is non-trivial.
%	\end{lemma}
%	\begin{proof}
%		The `only if' part requires a proof. Assume $\beta_X(f) = 0$ and 
%		let $g$ solve the equation $g + f = \id_X$ in $\End(X)$. Then $\beta_X(g) = 1$, so $g$ is a homotopy equivalence by 
%		proposition~\ref{prop beta}. Using right distributivity in $\End(X)$ we obtain $f = (g+f)f = gf + f$. Hence $gf = \0_X$ by 
%		cancellation, or, $f = \0_X$ since $g$ is a homotopy equivalence. 
%	\end{proof}
	Lemma~\ref{non-trivial idempotent} combined with theorem \ref{thm local} yields the following detection principle.
%	
%	As in the theory of rings, the kernel of a unit-reflecting homomorphism is a quasiregular ideal, and as such cannot contain non-trivial 
%	idempotents. Therefore, by proposition \ref{prop beta}, an idempotent $f\in \End(X)$ is non-trivial if and only if $\beta_X(f)$ is non-trivial, 
%	which combined with theorem \ref{thm local} yields the following detection principle.
	\begin{proposition} \label{prop detect}
	Let $X$ be an H-space.
		\begin{enumerate}[(i)]
			\item $X$ is indecomposable if and only if the ring $\im\beta_X$ contains no proper non-trivial idempotents.
			\item $X$ is strongly indecomposable if and only if the ring $\im\beta_X$ is local.
		\end{enumerate}
	\end{proposition}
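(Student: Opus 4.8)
The plan is to deduce Proposition \ref{prop detect} directly from Proposition \ref{prop beta}, Lemma \ref{non-trivial idempotent}, and Theorem \ref{thm:local_lnr}, using the fact that $\beta_X$ is a unit-reflecting, idempotent-lifting homomorphism from the loop near-ring $\End(X)$ onto its image, a (genuine) subring $R := \im\beta_X$ of $\prod_k \End(\pi_k(X))$.

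For part (i), I would argue as follows. Suppose first that $R$ contains a proper non-trivial idempotent $\varepsilon$; write $\varepsilon = \beta_X(n)$ for some $n \in \End(X)$. Since $\beta_X$ is idempotent-lifting, there is an idempotent $e \in \End(X)$ with $\beta_X(e) = \varepsilon$. Then $e \neq \0_X$ by Lemma \ref{non-trivial idempotent} (as $\beta_X(e) = \varepsilon \neq 0$), and $e \neq \id_X$ since $\beta_X(\id_X) = 1 \neq \varepsilon$; hence $X$ is decomposable. Conversely, if $X$ is decomposable, pick an idempotent $e \in \End(X)$ with $e \neq \0_X, \id_X$. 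Then $\beta_X(e)$ is an idempotent of $R$; it is nonzero by Lemma \ref{non-trivial idempotent}, and it is not $1$, for if $\beta_X(e) = 1 = \beta_X(\id_X)$ then $\beta_X(e \rdiv \id_X) = 0$ while $e \rdiv \id_X$ is again an idempotent (a routine check: if $e^2 = e$ then, writing $h := e \rdiv \id_X$, one verifies $h^2 = h$ using right distributivity and $e h = 0$), so Lemma \ref{non-trivial idempotent} forces $e \rdiv \id_X = \0_X$, i.e. $e = \id_X$, a contradiction. Thus $\beta_X(e)$ is a proper non-trivial idempotent of $R$.

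For part (ii), if $X$ is strongly indecomposable, then $\End(X)$ is local, and $\beta_X$ is non-trivial and unit-reflecting, so by Theorem \ref{thm:local_lnr}(iii) its image $R$ is a local ring. Conversely, if $R$ is local, then since $\beta_X \colon \End(X) \to R$ is a unit-reflecting homomorphism onto the local ring $R$, Theorem \ref{thm:local_lnr}(iii) gives that $\End(X)$ is a local loop near-ring, i.e. $X$ is strongly indecomposable. (Here $\beta_X$ is non-trivial since $\beta_X(\id_X) = 1 \neq 0$ in $R$, as $R$ is local hence nonzero whenever $X$ is non-contractible; the degenerate case $X \simeq *$ is trivially strongly indecomposable with $R = 0$, or can be excluded by the standing hypotheses.)

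The only genuinely delicate point is the verification in part (i) that $e \rdiv \id_X$ is again idempotent when $e$ is, since the additive structure is only a loop and one must be careful with the non-associativity and the single distributive law. I expect this to be the main obstacle, though a short computation handles it: from $e = e \cdot e$ and the identity $(e \rdiv \id_X) + \id_X = e$, right-multiplying by $e$ and cancelling yields $(e \rdiv \id_X)\, e = \0_X$, and combining this with right distributivity applied to $(e \rdiv \id_X) + \id_X = e$ multiplied on the right by $e \rdiv \id_X$ gives $(e \rdiv \id_X)^2 = e \rdiv \id_X$. Everything else is a formal consequence of the three cited results.
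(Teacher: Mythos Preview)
Your overall strategy for both parts is exactly the paper's: everything is meant to follow from Proposition~\ref{prop beta}, Lemma~\ref{non-trivial idempotent}, and Theorem~\ref{thm:local_lnr}. Part~(ii) is fine as written (and your observation that $\beta_X$ remains unit-reflecting when regarded as a map onto its image is correct, since $\beta_X(fg)=\beta_X(gf)=1$ forces $fg$ and $gf$ to be units in $\End(X)$, hence $f$ is a unit).

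The gap is in part~(i), in your ``delicate point''. You claim that $h:=e\rdiv\id_X$ is again idempotent, and offer the computation: from $h+\id_X=e$ you get $he=\0_X$ (correct), and then multiplying $h+\id_X=e$ on the right by $h$ gives $h^2+h=eh$. But this last equation does \emph{not} yield $h^2=h$: you know $he=\0_X$, not $eh=\0_X$, and without left distributivity there is no way to compute $eh$. Indeed, already in an ordinary ring your $h$ corresponds to $e-1$, which satisfies $(e-1)^2=1-e=-h$, not $h$; so the claim is false even in the associative, commutative-addition case. In a general loop near-ring none of the four ``complements'' $e\rdiv\id_X$, $\id_X\rdiv e$, $e\ldiv\id_X$, $\id_X\ldiv e$ need be idempotent.

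The fix is immediate and bypasses the issue entirely: if $\beta_X(e)=1$ then $\beta_X(e)$ is a unit, so by the unit-reflecting property of $\beta_X$ (Proposition~\ref{prop beta}) $e$ is a unit in $\End(X)$; but an invertible idempotent in any monoid equals the identity ($e^2=e$ and $e$ invertible give $e=\id_X$). With this one-line replacement your argument for (i) goes through and matches the paper's intended proof.
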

	
	Let $X_i$ be H-spaces, set $X := \prod_{i\in I} X_i$, and equip $X$ with the H-space structure induced by the $X_i$. 
	Then $\End(X) = [X,X] = \prod_{i \in I} [X,X_i]$ as loops. Denote by $e_i \in \End(X)$ the idempotent 
	$j_i p_i \colon X \to X_i \hookrightarrow X$ corresponding to the factor $X_i$. As a loop, $[X,X_i]$ 
	is naturally isomorphic to $e_i\End(X)$, the isomorphism being given by $[X,X_i] \to e_i\End(X)$, $f \mapsto j_i f$. Therefore  
	$\End(X) \cong \prod_{i \in I} e_i\End(X)$. Setting $A := \im \beta_X$, it is easily seen 
	that $A = \prod_{i \in I} e_{i\ihom}A$, not only as abelian groups, but also as right $A$-modules. We shall exploit this fact on 
	multiple occasions, as it translates a decomposition problem of an H-space into a (seemingly) more manageable decomposition problem 
	of a module.
	\begin{remark}
		More can be said. The loop $[X,X_i]$ has a natural right action of the loop near-ring $\End(X)$ given by composition
		\[
			[X,X_i] \times \End(X) \to [X,X_i] \textrm{, } (f,h) \mapsto fh \textrm{. }
		\] 
		Naturality of the addition on $[X, X_i]$ implies that $(f+g)h = fh + gh$ holds, i.e. this action is right distributive over 
		$+$ and makes $[X, X_i]$ into an {\em $\End(X)$-comodule} (see~\cite[definition 13.2]{Clay}). The isomorphism $[X,X_i] \cong e_i \End(X)$ 
		is then an isomorphism of right $\End(X)$-comodules. Of course, once the functor $\pi_*$ is applied to 
		$\End(X) = \prod_{i \in I} [X,X_i]$, we obtain the aforementioned identification of right $A$-modules $A = \prod_{i \in I} e_{i\ihom}A$.
	\end{remark}
	The next technical lemma draws a tight relation between certain retracts of $X$ and corresponding summands of the right $A$-module $A$.
	\begin{lemma} \label{lemma:tech}
		Assume that $Z$ and $Z'$ are retracts of an H-space $X$, with $Z$ strongly indecomposable.
		Set $A := \im \beta_X$, and let $e_\ihom = (jp)_\ihom$ and $e'_\ihom = (j'p')_\ihom$ be the idempotents corresponding to 
		retracts $Z$ and $Z'$, respectively. Then $Z$ and $Z'$ are homotopy equivalent spaces if and only if $e_\ihom A$ and 
		$e'_\ihom A$ are isomorphic right $A$-modules.
	\end{lemma}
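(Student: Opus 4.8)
The plan is to recast the statement as a question about ``corner'' summands of the ring $A$, by means of a dictionary that identifies sets of maps between retracts of $X$ with sets of homomorphisms between such summands, and then to transport isomorphisms across this dictionary, using the Whitehead theorem (equivalently, Proposition~\ref{prop beta}) to recognise homotopy equivalences. To set it up, fix for every retract $W$ of $X$ maps $j_W\colon W\to X$, $p_W\colon X\to W$ with $p_Wj_W=\id_W$, and write $e:=j_Zp_Z$, $e':=j_{Z'}p_{Z'}$ for the idempotents of $\End(X)$ representing $Z,Z'$, so that $e_\ihom=\beta_X(e)$ and $e'_\ihom=\beta_X(e')$. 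For retracts $Z,W$ the assignment $\alpha\mapsto j_W\alpha p_Z$ is a bijection $[Z,W]\to e_W\End(X)e_Z$ with inverse $u\mapsto p_W u j_Z$; these bijections are compatible with composition and send identities to the corner idempotents. Composing with $\beta_X$ yields surjections $[Z,W]\twoheadrightarrow e_{W\ihom}Ae_{Z\ihom}$, still compatible with composition, and $e_{W\ihom}Ae_{Z\ihom}=\Hom_A(e_{Z\ihom}A,e_{W\ihom}A)$ by the standard identification of corner bimodules with hom-modules (composition corresponding to composition). In particular $\End(Z)\to\End_A(e_\ihom A)$, $g\mapsto\beta_X(j_Zgp_Z)$, is a surjective homomorphism, namely $\beta_Z$ followed by the isomorphism $\im\beta_Z\cong e_\ihom Ae_\ihom$, $g_\ihom\mapsto(j_Z)_\ihom g_\ihom(p_Z)_\ihom$; likewise for $Z'$. (Strong indecomposability of $Z$ makes $\End_A(e_\ihom A)$ a local ring by Theorem~\ref{thm local}(iii); I do not expect to need this.)

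For the forward implication, let $h\colon Z\to Z'$ be a homotopy equivalence and put $\psi_0:=\beta_X(j_{Z'}hp_Z)$ and $\phi_0:=\beta_X(j_Zh^{-1}p_{Z'})$ in $A$. Computing inside $\End(X)$ before applying $\beta_X$ and using $p_{Z'}j_{Z'}=\id_{Z'}$, $p_Zj_Z=\id_Z$, $h^{-1}h=\id_Z$, $hh^{-1}=\id_{Z'}$, one checks $\psi_0\in e'_\ihom Ae_\ihom$, $\phi_0\in e_\ihom Ae'_\ihom$ and $\phi_0\psi_0=e_\ihom$, $\psi_0\phi_0=e'_\ihom$. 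Hence $x\mapsto\psi_0 x$ is an isomorphism $e_\ihom A\to e'_\ihom A$ of right $A$-modules, with inverse $y\mapsto\phi_0 y$.

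For the converse, suppose $e_\ihom A\cong e'_\ihom A$ as right $A$-modules. Under $\Hom_A(e_\ihom A,e'_\ihom A)\cong e'_\ihom Ae_\ihom$ such an isomorphism corresponds to an element $\psi_0\in e'_\ihom Ae_\ihom$ for which there exists $\phi_0\in e_\ihom Ae'_\ihom$ with $\phi_0\psi_0=e_\ihom$ and $\psi_0\phi_0=e'_\ihom$. Lift $\psi_0$ and $\phi_0$ along the above surjections to $\Psi\in e'\End(X)e$ and $\Phi\in e\End(X)e'$ (lift first to $\End(X)$, then sandwich by $e,e'$, which does not affect the $\beta_X$-image), and set $\bar\psi:=p_{Z'}\Psi j_Z\in[Z,Z']$ and $\bar\phi:=p_Z\Phi j_{Z'}\in[Z',Z]$. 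Compatibility of the dictionary with composition gives $j_Z(\bar\phi\bar\psi)p_Z=\Phi\Psi$, so $(j_Z)_\ihom\,\beta_Z(\bar\phi\bar\psi)\,(p_Z)_\ihom=\beta_X(\Phi\Psi)=\phi_0\psi_0=e_\ihom=(j_Z)_\ihom(p_Z)_\ihom$; cancelling $(p_Z)_\ihom$ on the left and $(j_Z)_\ihom$ on the right (both are split, since $(p_Z)_\ihom(j_Z)_\ihom=\id$) yields $\beta_Z(\bar\phi\bar\psi)=\id$, and so $\bar\phi\bar\psi$ is a self-homotopy equivalence of $Z$ by the Whitehead theorem. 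Symmetrically $\beta_{Z'}(\bar\psi\bar\phi)=\id$ and $\bar\psi\bar\phi$ is a self-homotopy equivalence of $Z'$. Then $\bar\psi\colon Z\to Z'$ has left inverse $(\bar\phi\bar\psi)^{-1}\bar\phi$ and right inverse $\bar\phi(\bar\psi\bar\phi)^{-1}$; these agree, so $\bar\psi$ is a homotopy equivalence and $Z\simeq Z'$.

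The step I expect to be the main obstacle is the descent in the converse: $\Phi\Psi$ need not be homotopic to $j_Zp_Z$, and because the idempotents $e$ and $e'$ need not commute there is no shortcut to the conclusion inside $\End(X)$; the point is that transporting back into $\End(Z)$ produces a self-map of $Z$ that induces the identity on all homotopy groups, which is where the Whitehead theorem does the work. What remains is routine bookkeeping with the structure maps $j_W,p_W$ and the functoriality of the dictionary.
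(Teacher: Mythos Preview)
Your argument is correct, and the forward direction is essentially identical to the paper's. In the converse direction, however, you take a genuinely different and more elementary route. The paper first uses the hypothesis that $Z$ is strongly indecomposable: since $\End_A(e_\ihom A)\cong e_\ihom A e_\ihom\cong\im\beta_Z$ is local, cancellation for modules with local endomorphism ring upgrades $e_\ihom A\cong e'_\ihom A$ to conjugacy of the idempotents, $e'_\ihom=u_\ihom^{-1}e_\ihom u_\ihom$ for some unit $u_\ihom\in U(A)$ (this is the content of the reference to \cite[exercise 21.16]{Lam}); lifting $u$ and its inverse and sandwiching gives the desired homotopy equivalence. You bypass this entirely: you lift the corner elements $\psi_0,\phi_0$ realizing the module isomorphism directly, and observe that the resulting composites $\bar\phi\bar\psi$ and $\bar\psi\bar\phi$ induce the identity on homotopy groups, so Whitehead finishes. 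As you suspected, your proof never uses the strong indecomposability of $Z$; the lemma in fact holds without that hypothesis. The paper's route is slicker once one has the conjugacy statement in hand, but yours is self-contained and proves a slightly stronger result.
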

	\begin{proof}
		Suppose $Z \simeq Z'$. Pick a homotopy equivalence $v \colon Z \to Z'$ with homotopy inverse $v^{-1} \colon Z' \to Z$. 
		Consider the elements $(j'vp)_\ihom$ and $(jv^{-1}p')_\ihom$ in the ring $A$. 
%		Suppose $(j'vp)_\ihom e_\ihom a_\ihom = 0$ for some $e_\ihom a_\ihom \in e_\ihom A$. Then 
%		\[
%			j_\ihom v^{-1}_\ihom p'_\ihom (j'vp)_\ihom (jp)_\ihom a_\ihom = e_\ihom a_\ihom = 0 \textrm{. }
%		\]
%		Similarly, $(jv^{-1}p')_\ihom e'_\ihom a_\ihom = 0$ implies $e'_\ihom a_\ihom =0$. 
%		From $p_\ihom j_\ihom = \id_{Z\ihom}$ and $p'_\ihom j'_\ihom = \id_{Z'\ihom}$ it follows 
		Note that $(jv^{-1}p')_\ihom(j'vp)_\ihom = e_\ihom$ and $(j'vp)_\ihom(jv^{-1}p')_\ihom=e'_\ihom$. 
		For any idempotent $f_\ihom \in A$ left multiplication by $f_\ihom$ is the identity of the right $A$-module $f_\ihom A$. 
		It follows that left multiplication by $(j'vp)_\ihom$ is an endomorphism of the right $A$-module $A$, which maps $e_\ihom A$ isomorphically 
		onto the submodule $e'_\ihom A$. Hence, $e_\ihom A \cong e'_\ihom A$.
		
		For the reverse implication, observe that $e_\ihom A e_\ihom$ and $\im \beta_{Z}$ are 
		isomorphic as rings, the latter ring being local by proposition~\ref{prop detect}. Since $e_\ihom A \cong e'_\ihom A$ 
		as right $A$-modules, the idempotents $e_{\ihom}$ and $e'_{\ihom}$ are conjugate in $A$, i.e. 
		$e'_{\ihom} = u^{-1}_{\ihom} e_{\ihom} u_{\ihom}$ 
		for some unit $u_{\ihom} \in U(A)$, see~\cite[exercise 21.16]{Lam}. Now form the composed maps
		\begin{align*}
			g = p u j' &\colon Z' \hookrightarrow X \to X \to Z \\ \textrm{and } \quad 
			h = p' u^{-1} j &\colon Z \hookrightarrow X \to X \to Z' \textrm{, } 
			\phantom{\quad \textrm{ and}}
		\end{align*}
		and verify that $g h$ and $h g$ induce the identity endomorphisms of the respective homotopy groups. 
		Therefore, $Z \simeq Z'$.
	\end{proof}
	Finite product decompositions of H-spaces behave nicely, as one is tempted to suspect from the module case.
	\begin{theorem} \label{thm:KSAfin}
		Assume that an H-space $X$ admits a (finite) product decomposition $X \simeq X_1 \times \cdots \times X_n$ into strongly indecomposable 
		factors $X_i$. Then:
		\begin{enumerate}[(i)]
			\item Any indecomposable retract $Z$ of $X$ is strongly indecomposable. Moreover, there is an index $i$ such that $Z \simeq X_i$. 
			\item If $X \simeq X'_1 \times \cdots \times X'_m$ is any other decomposition of $X$ into indecomposable factors $X'_k$, then 
				$m = n$, and there is a permutation $\varphi$ such that $X_i \simeq X'_{\varphi(i)}$ holds for all $i$.
		\end{enumerate}
	\end{theorem}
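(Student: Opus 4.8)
The plan is to transport the entire problem, via the unit-reflecting idempotent-lifting homomorphism $\beta_X$, into the module category and invoke the classical Krull--Schmidt--Remak--Azumaya theorem for the right $A$-module $A := \im\beta_X$, where $A \cong \prod_{i=1}^n e_{i\sharp}A$ and each $e_{i\sharp}A$ is the summand corresponding to the strongly indecomposable factor $X_i$. First I would record that, by Proposition~\ref{prop detect}(ii), strong indecomposability of $X_i$ means $\im\beta_{X_i}$ is local, and via the ring isomorphism $e_{i\sharp}Ae_{i\sharp} \cong \im\beta_{X_i}$ used in the proof of Lemma~\ref{lemma:tech}, the endomorphism ring $\End_A(e_{i\sharp}A) \cong e_{i\sharp}Ae_{i\sharp}$ is local. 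Hence $A$ is a finite direct sum of modules with local endomorphism rings, so the KSRA theorem (\cite[theorem 2.12]{Facchini}) applies: any two decompositions of $A$ into indecomposable summands are equivalent up to permutation.

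For part (i): let $Z$ be an indecomposable retract of $X$ with corresponding idempotent $e = jp \in \End(X)$ and $e_\sharp = \beta_X(e)$. Indecomposability of $Z$ forces, by Proposition~\ref{prop detect}(i), that $\im\beta_Z \cong e_\sharp A e_\sharp$ contains no proper nontrivial idempotents, which means $e_\sharp A$ is an indecomposable right $A$-module. By the uniqueness clause of KSRA applied to the decomposition $A = \prod_{i=1}^n e_{i\sharp}A$ together with a complement of $e_\sharp A$ inside $A$ (every direct summand of a module that decomposes with local endomorphism rings is again a direct sum of members from the list, by the exchange property \cite[corollary 2.9 or theorem 2.12]{Facchini}), we get $e_\sharp A \cong e_{i\sharp}A$ for some $i$. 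Then Lemma~\ref{lemma:tech} — whose hypothesis "$Z$ strongly indecomposable" I must be careful about — yields $Z \simeq X_i$; once that equivalence is in hand, $\End(Z) \cong \End(X_i)$ is a local loop near-ring, so $Z$ is strongly indecomposable, establishing both assertions. The subtlety here is the order of implications: Lemma~\ref{lemma:tech} as stated assumes $Z$ strongly indecomposable, so to avoid circularity I would instead argue directly that $e_\sharp A \cong e_{i\sharp}A$ makes $e_\sharp$ and $e_{i\sharp}$ conjugate in $A$ (using that $e_{i\sharp}Ae_{i\sharp}$ is local, \cite[exercise 21.16]{Lam}), then build the maps $g,h$ exactly as in the second half of the proof of Lemma~\ref{lemma:tech} to conclude $Z \simeq X_i$ — no strong indecomposability of $Z$ is needed for that direction.

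For part (ii): given a second decomposition $X \simeq X'_1 \times \cdots \times X'_m$ into indecomposable factors, each $X'_k$ is an indecomposable retract of $X$, so part (i) applies: $X'_k$ is strongly indecomposable and $X'_k \simeq X_{\sigma(k)}$ for some index $\sigma(k)$. On the module side this reads $A \cong \prod_{k=1}^m e'_{k\sharp}A$ with each $e'_{k\sharp}A \cong e_{\sigma(k)\sharp}A$ indecomposable with local endomorphism ring; KSRA then forces $m = n$ and $\sigma$ to be a bijection, i.e. a permutation $\varphi := \sigma$ with $X_i \simeq X'_{\varphi^{-1}(i)}$. (Re-index $\varphi$ appropriately so the statement reads $X_i \simeq X'_{\varphi(i)}$.)

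I expect the main obstacle to be purely bookkeeping: making sure that the decomposition $A = \prod_{i=1}^n e_{i\sharp}A$ really is a decomposition into \emph{indecomposable} modules (which needs Proposition~\ref{prop detect}(i) applied to each $X_i$, not merely (ii)), that the endomorphism ring identifications $\End_A(e_{i\sharp}A) \cong e_{i\sharp}Ae_{i\sharp} \cong \im\beta_{X_i}$ are set up correctly so that "local ring" transfers, and — most importantly — circumventing the apparent circularity in invoking Lemma~\ref{lemma:tech} for part (i) by reproving its relevant direction without the strong-indecomposability hypothesis on $Z$. None of these is deep; the real content is entirely carried by $\beta_X$ being unit-reflecting and idempotent-lifting (Proposition~\ref{prop beta}) and by the classical module-theoretic KSRA theorem.
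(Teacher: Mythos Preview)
Your proposal is correct and follows the same route as the paper: pass to the right $A$-module $A = \im\beta_X$, apply the classical Krull--Schmidt--Remak--Azumaya theorem for modules, and translate back via Lemma~\ref{lemma:tech}. The one place where the paper is tidier is exactly the circularity you flag in part~(i): rather than re-proving a direction of Lemma~\ref{lemma:tech}, the paper cites \cite[lemma 2.11]{Facchini}, which directly gives that any indecomposable direct summand of $A$ has a local endomorphism ring and is isomorphic to one of the $e_{i\sharp}A$ --- so $Z$ is seen to be strongly indecomposable \emph{before} Lemma~\ref{lemma:tech} is invoked, and the lemma then applies as stated with no workaround needed.
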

	\begin{proof}
		Set $A := \im \beta_X$. A retraction $p \colon X \to Z$ and its right inverse $j \colon Z \hookrightarrow X$ determine an 
		idempotent $f_\ihom = (jp)_\ihom$ in the ring $A$. Also, we have idempotents $e_{i \ihom} = (j_i p_i)_\ihom \in A$ and 
		$e'_{k \ihom} = (j'_k p'_k)_\ihom \in A$ corresponding to 
		the factors $X_i$ and $X'_k$ respectively. Viewing $A$ as a right $A$-module, we see that (i) $f_\ihom A$ is a direct 
		summand of $A$, and (ii) $A$ admits two direct-sum decompositions
		\[
			A = e_{1\ihom}A \oplus \cdots \oplus e_{n\ihom}A = e'_{1\ihom}A \oplus \cdots \oplus e'_{m\ihom}A \textrm{, }
		\]
		The theorem will now follow almost directly from its algebraic analogues:
		\begin{enumerate}[(i)]
			\item By~\cite[lemma 2.11]{Facchini} 
				$f_\ihom A$ has a local endomorphism ring. Moreover, $f_\ihom A$ is isomorphic to some $e_{i\ihom}A$. Since 
				$\End_A(f_\ihom A) \cong f_\ihom A f_\ihom \cong \im \beta_Z$ as rings, $Z$ is strongly 
				indecomposable by proposition~\ref{prop detect}. Hence, by lemma~\ref{lemma:tech}, $Z \simeq X_i$.
%			
%				Note that $\End_A(f_\ihom A)$ is ring-isomorphic to $f_\ihom A f_\ihom \cong \im \beta_Z$, therefore $f_\ihom A$ is an 
%				indecomposable $A$-module by proposition~\ref{prop detect}. By~\cite[lemma 2.11]{Facchini} the right $A$-module $f_\ihom A$ 
%				has a local endomorphism ring. It follows that $\im \beta_Z$ is local, so $Z$ is strongly indecomposable by another use 
%				of proposition~\ref{prop detect}.
			\item By proposition~\ref{prop detect} the $A$-modules $e_{i\ihom}A$ are indecomposable with local endomorphism 
				rings, and, the $A$-modules $e'_{k \ihom}A$ are indecomposable. By the Krull--Schmidt--Remak--Azumaya 
				theorem~\cite[theorem 2.12]{Facchini} there is a bijection $\varphi \colon \{1, \ldots, n\} \to \{1, \ldots, m\}$ such 
				that $e_{i\ihom}A$ and $e'_{\varphi(i)\ihom}A$ are isomorphic right $A$-modules. Now use lemma~\ref{lemma:tech} to conclude 
				$X_i \simeq X'_{\varphi(i)}$ for all $i = 1,\ldots, n$. \qedhere
		\end{enumerate}
	\end{proof}
	We will use the proof above as a prototypical example of use of lemma~\ref{lemma:tech} to deduce uniqueness of H-space decompositions 
	from uniqueness of module decompositions. The Krull--Schmidt--Remak--Azumaya theorem for modules, however, is a statement about direct-sum 
	decompositions of modules, and is false for general, i.e. infinite, direct-product decompositions, see~\cite[example 2.1]{Franetic}. 
	The following proposition is a very special case of~\cite[theorem 2.4]{Franetic} that will be used later in this section.
	\begin{proposition} \label{prop:KSAprod}
		Let $R$ be a proper subring of the rationals, $A$ an $R$-algebra, and $\{M_i : i \in I\}$ and $\{N_k:k\in K\}$ two 
		countable families of indecomposable $A$-modules, which are finitely generated as $R$-modules. Assume that 
		$\End_A(M_i)$ are local rings. 
		If there is an isomorphism $\prod_{i \in I} M_i \cong \prod_{k \in K} N_k$, then there exists a bijection $\varphi \colon I \to K$ 
		such that $M_i \cong N_{\varphi(i)}$ as $A$-modules.
	\end{proposition}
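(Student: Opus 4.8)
The plan is to reduce Proposition~\ref{prop:KSAprod} to the general infinite Krull--Schmidt theorem \cite[theorem 2.4]{Franetic} by checking its hypotheses. That theorem requires the ambient category to be such that the relevant modules have suitable finiteness properties; the key point here is that since each $M_i$ and $N_k$ is finitely generated as a module over the proper subring $R \subseteq \QQ$, and $R$ is Noetherian (being a localization of $\ZZ$, hence a PID or at least a one-dimensional Noetherian domain), each $M_i$ and $N_k$ has finite length as an $R$-module — or at the very least is a Noetherian and Artinian object in an appropriate sense. First I would verify that the endomorphism rings $\End_A(M_i)$ are not merely local but in fact \emph{semiperfect} or satisfy whatever descending-chain condition \cite[theorem 2.4]{Franetic} demands: since $\End_A(M_i) \subseteq \End_R(M_i)$ and the latter is a finitely generated $R$-module (as $M_i$ is finitely generated over the Noetherian ring $R$), $\End_A(M_i)$ is itself a module-finite $R$-algebra, hence a semilocal Noetherian ring; combined with the hypothesis that it is local, this places it comfortably in the scope of the cited theorem.

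Next I would handle the $N_k$ side. The subtlety is that Proposition~\ref{prop:KSAprod} only assumes the $N_k$ are indecomposable, \emph{not} that their endomorphism rings are local. I would argue that this is automatic: each $\End_A(N_k)$ is a module-finite $R$-algebra as above, hence semiperfect; an indecomposable module over a ring whose endomorphism ring is semiperfect has a local endomorphism ring (a semiperfect ring with no nontrivial idempotents is local). Thus both families consist of modules with local endomorphism rings, and the situation is genuinely symmetric. With this in hand, the hypotheses of \cite[theorem 2.4]{Franetic} are met: two countable products of modules with local (module-finite over $R \subsetneq \QQ$) endomorphism rings that are isomorphic admit a matching bijection $\varphi \colon I \to K$ with $M_i \cong N_{\varphi(i)}$.

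The main obstacle I anticipate is matching the precise technical hypotheses of \cite[theorem 2.4]{Franetic} — in particular, whatever condition it imposes on $R$, $A$, and the module category (e.g.\ that $A$ be module-finite over $R$, or that the modules lie in a category closed under the relevant limits, or a condition guaranteeing that infinite products behave well, such as $R$ being a complete DVR versus merely a subring of $\QQ$). If \cite[theorem 2.4]{Franetic} is stated for $R$ a proper subring of $\QQ$ and $A$ an arbitrary $R$-algebra with the modules finitely generated over $R$, then the reduction is essentially immediate and the proof amounts to the two paragraphs above. If instead the cited theorem is phrased in a more abstract categorical language, the work is in identifying the category of finitely-$R$-generated $A$-modules as an instance of its framework and observing that countable products of such modules, while no longer finitely generated, still lie in the category to which the cited uniqueness statement applies. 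I would close by remarking that the countability hypothesis on $I$ and $K$ is exactly what \cite[theorem 2.4]{Franetic} needs and cannot be dropped in general, which is why the corresponding H-space statement (theorem~\ref{thm:KSAwprod}) will also be restricted to countable products.
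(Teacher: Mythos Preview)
Your approach is exactly the one taken in the paper: the proposition is stated there without proof, with only the remark that it is ``a very special case of \cite[theorem 2.4]{Franetic}.'' Your elaboration on why the hypotheses are met (in particular the observation that $\End_A(N_k)$, being a module-finite algebra over the Noetherian ring $R$, is semiperfect and hence local once $N_k$ is indecomposable) goes beyond what the paper records but is in the same spirit and is correct.
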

%	
%	\begin{remark}
%		The above argument does not generalize to product decompositions into infinitely many factors. The problem is purely algebraic; 
%		an infinite direct product is rarely a direct sum, and direct-product decompositions of modules do not share the nice uniqueness 
%		properties of direct-sum decompositions. One (rather surprising) example is the existence of an isomorphism 
%		\[
%			 \widehat{\ZZ}_p \times \Big(\prod_{k = 1}^\infty \ZZ/p^k \ZZ\Big) \,\cong\, \prod_{k = 1}^\infty \ZZ/p^k\ZZ \textrm{, }
%		\] 
%		where $\ZZ/p^k \ZZ$ denotes the integers $\mathrm{mod} \ p^k$, and $\widehat{\ZZ}_p$ the $p$-adic integers, 
%		see~\cite[example 2.1]{Franetic}. In turn, that means that the spaces 
%		\[
%			K(\widehat{\ZZ}_p, n) \times \Big(\prod_{k = 1}^\infty K(\ZZ/p^k \ZZ,n)\Big) \quad \textrm{and} \quad 
%				\prod_{k = 1}^\infty K(\ZZ/p^k \ZZ,n)
%		\]
%		have the same weak homotopy type. {\tt (BUT! Are they homotopy equivalent?)}
%	\end{remark}

	Fix a {\em proper} subring $R \lneq \mathbb{Q}$. We will call a connected H-space $X$ {\em $R$-local} if $\pi_*(X)$ is an $R$-module. 
	A connected $R$-local H-space $X$ is called {\em homotopy-finite} if $\pi_*(X)$ is finitely generated over $R$, it is called 
	{\em of finite type} if $\pi_k(X)$ is finitely generated over $R$ for each $k$. 
%	(Equivalence of these two definitions is a standard Postnikov tower and Hurewicz theorem argument?? [ref]) 
%	A connected $R$-local H-space $X$ is called {\em finite} if $H_*(X)$ is finitely generated over $R$, it is called {\em homotopy-finite} 
%	if $\pi_*(X)$ is finitely generated over $R$. A connected $R$-local H-space $X$ is called {\em of finite type} if either $H_k(X)$ or $\pi_k(X)$ 
%	is finitely generated over $R$ for each $k$. (For the equivalence of both conditions see e.g. [tom Dieck].)
		
	In~\cite{Gray} B. Gray proves a unique decomposition theorem for finite type H-spaces in the $p$-complete setting, 
	see~\cite[corollary 1.4]{Gray}. For $R$-local finite type H-spaces we have the following results 
	(theorems~\ref{thm:KSAprod} and~\ref{thm:KSAwprod}). 
%	This result can be extended to arbitrary weak product decompositions $X \simeq \bigoplus_{i \in I} X_i$ 
%	provided that $X$ is $R$-local and the factors $X_i$ are strongly indecomposable and homotopy-finite. 
	\begin{theorem}
		\label{thm:KSAprod}
		Let $\{X_i : i \in I\}$ and $\{X'_k : k \in K\}$ be two families of $R$-local, homotopy-finite H-spaces, with all of the $X_i$ 
		strongly indecomposable, and all of the $X'_k$ indecomposable. Assume that the product $\prod_{i\in I} X_i$ is of finite type. If 
		the products $\prod_{i\in I} X_i$ and $\prod_{k \in K} X'_k$ are homotopy equivalent, then there exists a bijection 
		$\varphi \colon I \to K$ such that $X_i \simeq X'_{\varphi(i)}$ for all $i$.
	\end{theorem}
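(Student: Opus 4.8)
The strategy mirrors the proof of Theorem~\ref{thm:KSAfin}: one applies $\beta_X$ to convert the product decompositions of the H-space into direct-product decompositions of a module over a ring, invokes the algebraic Krull--Schmidt theorem for countable products (Proposition~\ref{prop:KSAprod}), and then transports the resulting module isomorphisms back to homotopy equivalences by means of Lemma~\ref{lemma:tech}. The only real work, compared with the finite case, is to check that the finiteness hypotheses of Proposition~\ref{prop:KSAprod} are met.

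First I would fix a homotopy equivalence $\prod_{i\in I}X_i\simeq\prod_{k\in K}X'_k$, write $X$ for the resulting space with its induced H-structure (which is then of finite type), and set $A:=\im\beta_X$. By the discussion preceding Lemma~\ref{lemma:tech}, the two product presentations of $X$ give two direct-product decompositions of the right $A$-module $A$,
\[
 A \;=\; \prod_{i\in I} e_{i\ihom}A \;=\; \prod_{k\in K} e'_{k\ihom}A ,
\]
where $e_i=j_ip_i$ and $e'_k=j'_kp'_k$ are the idempotents attached to the factors. One also checks that $A$ is an $R$-algebra: it is a subring of the $R$-algebra $\prod_k\End(\pi_k(X))$ containing $\ZZ\cdot 1$, and since $X$ is $R$-local every unit $q$ of $R$ acts invertibly on $\pi_*(X)$, so the self-map $q\cdot\id_X$ is a homotopy equivalence ($\beta_X$ being unit-reflecting by Proposition~\ref{prop beta}) and therefore $q^{-1}\cdot 1=\beta_X\bigl((q\cdot\id_X)^{-1}\bigr)\in A$; hence $R\cdot 1\subseteq A$.

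Next I would verify the hypotheses of Proposition~\ref{prop:KSAprod} for the families $\{e_{i\ihom}A\}_{i\in I}$ and $\{e'_{k\ihom}A\}_{k\in K}$. These index sets are countable: since $X_i$ is strongly indecomposable, $\End(X_i)$ is local, hence nonzero, so $X_i$ has a nonzero homotopy group; for each $k$ the module $\pi_k(X)=\prod_{i\in I}\pi_k(X_i)$ is finitely generated over the countable ring $R$, hence countable, which forces $\pi_k(X_i)=0$ for all but finitely many $i$, so $I=\bigcup_{k\ge 1}\{i:\pi_k(X_i)\neq 0\}$ is a countable union of finite sets; the same argument applies to $K$ (the factors $X'_k$ being non-contractible). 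Each $e_{i\ihom}A$ is finitely generated over $R$: the idempotent $e_{i\ihom}$ is supported in the finitely many degrees $k$ with $\pi_k(X_i)\neq 0$, and in each such degree $e_{i\ihom}\End(\pi_k(X))\cong\Hom_R\bigl(\pi_k(X),\pi_k(X_i)\bigr)$ is finitely generated over $R$ because $\pi_k(X)$ is ($X$ of finite type), $\pi_k(X_i)$ is ($X_i$ homotopy-finite), and $R$ is noetherian; consequently $e_{i\ihom}\prod_k\End(\pi_k(X))$ is a finite product of finitely generated $R$-modules, and its submodule $e_{i\ihom}A$ is finitely generated over $R$. The analogous statement for $e'_{k\ihom}A$ uses homotopy-finiteness of $X'_k$. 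Finally, exactly as in the proof of Lemma~\ref{lemma:tech}, $\End_A(e_{i\ihom}A)\cong e_{i\ihom}Ae_{i\ihom}\cong\im\beta_{X_i}$, which is local by Proposition~\ref{prop detect} since $X_i$ is strongly indecomposable, so $e_{i\ihom}A$ is indecomposable with local endomorphism ring; likewise $\End_A(e'_{k\ihom}A)\cong\im\beta_{X'_k}$ contains no nontrivial idempotents since $X'_k$ is indecomposable, so $e'_{k\ihom}A$ is indecomposable.

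Now Proposition~\ref{prop:KSAprod}, applied to the isomorphism $\prod_{i\in I}e_{i\ihom}A\cong\prod_{k\in K}e'_{k\ihom}A$, yields a bijection $\varphi\colon I\to K$ with $e_{i\ihom}A\cong e'_{\varphi(i)\ihom}A$ as right $A$-modules for every $i$. Since $X_i$ and $X'_{\varphi(i)}$ are retracts of $X$ and $X_i$ is strongly indecomposable, Lemma~\ref{lemma:tech} upgrades each such module isomorphism to a homotopy equivalence $X_i\simeq X'_{\varphi(i)}$, which is the desired conclusion. I expect the finite-generation step to be the only delicate point: it is here that the hypothesis that the \emph{total} product $\prod_{i\in I}X_i$ be of finite type is used essentially --- note that $A$ itself is typically not finitely generated over $R$, only each summand $e_{i\ihom}A$ is --- whereas everything else is routine bookkeeping around Proposition~\ref{prop:KSAprod} and Lemma~\ref{lemma:tech}.
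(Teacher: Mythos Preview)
Your proposal is correct and follows essentially the same route as the paper's own proof: pass to $A=\im\beta_X$, obtain the two product decompositions $A=\prod_i e_{i\ihom}A=\prod_k e'_{k\ihom}A$, verify the finiteness and countability hypotheses needed for Proposition~\ref{prop:KSAprod}, and then invoke Lemma~\ref{lemma:tech}. You supply a little more detail than the paper does---in particular the justification that $A$ is an $R$-algebra and the explicit countability argument for $I$ and $K$---but the structure and the key steps are the same.
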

	\begin{remark}
		More often than not, the products in the above statement will not have the homotopy type of a CW-complex, even though we are 
		assuming that the spaces $X_i$ and $X'_k$ are CW-complexes (or have the homotopy type of a CW-complex). 
	\end{remark}
	\begin{proof}
		We set $X := \prod_{i \in I} X_i$, $A := \im \beta_X$, and use $e_i = j_i p_i$ and $e'_k = j'_k p'_k$ to denote the idempotents in
		$\End(X)$ corresponding to factors of each decomposition. Then $A$ is an $R$-algebra and the right $A$-module $A$ admits two direct 
		product decompositions
		\[
			A = \prod_{i \in I} e_{i\ihom} A = \prod_{k \in K} e'_{k\ihom} A \textrm{. }
		\] 
		By proposition~\ref{prop detect} the $A$-modules $e_{i\ihom} A$ are strongly indecomposable, while the $A$-modules 
		$e'_{k\ihom}A$ are indecomposable.
		
		View $e_{i\ihom}A$ as an $R$-submodule of $\Hom_R(\pi_*(X), \pi_*(X_i))$ via the monomorphism 
		$e_{i\ihom}A \to \Hom_R(\pi_*(X), \pi_*(X_i))$, $e_{i\ihom} f_\ihom \mapsto p_{i\ihom} f_\ihom$. 
		Since $\pi_*(X_i)$ is finitely generated over $R$ and $X$ is of finite type, $\Hom_R(\pi_*(X), \pi_*(X_i))$---the $R$-module of 
		graded homomorphisms $\pi_*(X) \to \pi_*(X_i)$---is finitely generated. As $R$ is noetherian, each $e_{i\ihom} A$ must 
		also be finitely generated as an $R$-module. Similarly, each $e'_{k\ihom}A$ is also finitely generated as an $R$-module. 
		Now, $X$ of finite type forces both index sets, $I$ and $K$, to be at most countable. 
		Hence, all of the assumptions of propositon~\ref{prop:KSAprod} are satisfied, so there is a bijection $\varphi \colon I \to K$, 
		such that $e_{i\ihom} A$ and $e'_{\varphi(i)\ihom} A$ are isomorphic right $A$-modules. By lemma~\ref{lemma:tech} we must have 
		$X_i \simeq X'_{\varphi(i)}$ for all $i \in I$. 
	\end{proof}
	
	There is another decomposition of spaces often studied in homotopy category, the weak product. Let 
	$X'$ be the set of all points $x = (x_i)_{i \in I} \in \prod_{i \in I} X_i$ with all but finitely many of the $x_i$ equal to the 
	base point $*_i \in X_i$. Equip the product $\prod_{i \in I} X_i$ with the compactly generated topology, and let $X'$ inherit the 
	relative topology. We will (deliberately) use the notation $\wprod_{i \in I} X_i$ for the space $X'$ and call it the {\em weak 
	product of the $X_i$}. Of course, $X'$ can also be viewed as a union (direct limit) of all compactly generated {\em finite} products 
	of the $X_i$. Hence, if all of the $X_i$ are $T_1$-spaces, there is a natural isomorphism 
	$\pi_*(\wprod_{i \in I} X_i) \cong \bigoplus_{i \in I} \pi_*(X_i)$. Also, if all of the $X_i$ are CW-complexes, 
	then the topology on $\wprod_{i \in I} X_i$ is precisely the CW-topology. 
	
	Let $\{X_i : i \in I\}$ be a family of H-spaces with additions $\mu_i \colon X_i \times X_i \to X_i$. 
	Define a map $\mu' \colon X' \times X' \to X'$ to be the composite 
	\[ \textstyle
		X' \times X' = \left(\wprod_{i \in I} X_i\right) \times \left(\wprod_{i \in I} X_i\right)  \xrightarrow{\tau} 
		\wprod_{i \in I} (X_i \times X_i) \xrightarrow{\oplus_{i \in I} \mu_i} \wprod_{i \in I} X_i = X'\textrm{, }
	\]
	where $\tau$ is the coordinate shuffle map, i.e. $\tau((x_i)_{i \in I}, (y_i)_{i \in I}) = (x_i, y_i)_{i \in I}$. Clearly, 
	$\tau$ is well-defined. Continuity of $\tau$ is assured by equipping all the products above with the compactly generated topology. 
	A routine exercise shows that $\mu' j'_1 \simeq \id_{X'} \simeq \mu' j'_2$ holds for the two inclusions 
	$j'_1, j'_2 \colon X' \hookrightarrow X' \times X'$. (Let $j_{i1} \colon X_i \hookrightarrow X_i \times X_i$ be the inclusions 
	of the first factor, and suppose $H_i \colon X_i \times \II \to X_i$ are homotopies $\mathrm{rel}\ *_i$ from 
	$\mu_i j_{i1}$ to $\id_{X_i}$. Consider the composite 
	\[ \textstyle
		H' \colon X' \times \II \hookrightarrow \left(\prod_{i\in I} X_i\right) \times \II^I \xrightarrow{\tau} 
		\prod_{i \in I} (X_i \times \II) \xrightarrow{\Pi_{i \in I} H_i} \prod_{i\in I} X_i \textrm{, }
	\] 
	where $\II \hookrightarrow \II^I$ is the diagonal inclusion. Note that in fact $H'(X' \times \II) \subseteq X'$, 
	since the homotopies $H_i$ are $\mathrm{rel}\ *_i$. Therefore $H'$ is a homotopy $\mathrm{rel}\ (*_i)_{i\in I}$ 
	from $\mu'j'_1$ to $\id_{X'}$. Repeat for $j'_2$.) Hence, $\wprod_{i \in I} X_i$ is also an H-space.
	
	Let $X_i$ be H-spaces, and let $X' = \wprod_{i \in I} X_i$ be the weak product of the $X_i$. Again,  
	we denote by $e_i = j_i p_i$ the idempotent in $\End(X')$ corresponding to the factor $X_i$. The functor $\pi_*$ maps 
	$\End(X')=[X',X']$ into 
	\[
		\textstyle
		\End_R(\pi_*(X')) = \Hom_R(\bigoplus_{i \in I} \pi_*(X_i), \pi_*(X')) = \prod_{i \in I} \Hom_R(\pi_*(X_i), \pi_*(X')) \textrm{, }
	\]
	and there is a natural identification $\Hom_R(\pi_*(X_i),\pi_*(X')) \cong \End_R(\pi_*(X'))e_{i\ihom}$. Set $A' := \im \beta_{X'}$. 
	Restricting the decomposition above to the subring $A'$ of $\End_R(\pi_*(X'))$ we get $A' = \prod_{i \in I} A'e_{i\ihom}$
	as a {\em left} $A'$-module.
	
	We can now state the weak product version of theorem~\ref{thm:KSAprod}. The proof is deliberately omitted, as it uses the same  
	argument as the proof of theorem~\ref{thm:KSAprod} with the left $A'$-module $A'$ in place of the right $A$-module $A$.
	\begin{theorem}
		\label{thm:KSAwprod}
		Let $\{X_i : i \in I\}$ and $\{X'_k : k \in K\}$ be two families of $R$-local, homotopy-finite H-spaces, with all of the $X_i$ 
		strongly indecomposable, and all of the $X'_k$ indecomposable. Assume that the weak product $\wprod_{i\in I} X_i$ is of finite type. 
		If the weak products $\wprod_{i\in I} X_i$ and $\wprod_{k \in K} X'_k$ are homotopy equivalent, then there exists a bijection 
		$\varphi \colon I \to K$ such that $X_i \simeq X'_{\varphi(i)}$ for all $i$.
	\end{theorem}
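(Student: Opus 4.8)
The plan is to transcribe the proof of Theorem~\ref{thm:KSAprod}, replacing every \emph{right} $A$-module statement by its \emph{left} $A'$-module counterpart. Set $X' := \wprod_{i\in I} X_i$ and $A' := \im\beta_{X'}$, and let $e_i = j_i p_i$, $e'_k = j'_k p'_k$ be the idempotents in $\End(X')$ attached to the two weak-product decompositions. As recorded just above the statement, $A'$ is an $R$-algebra and the left $A'$-module $A'$ carries the decompositions
\[
	A' = \prod_{i\in I} A' e_{i\ihom} = \prod_{k\in K} A' e'_{k\ihom}\textrm{. }
\]
Since $p_i j_i \simeq \id_{X_i}$, the endomorphism ring of the left $A'$-module $A' e_{i\ihom}$ is anti-isomorphic to $e_{i\ihom} A' e_{i\ihom}$, which is isomorphic as a ring to $\im\beta_{X_i}$; likewise for the $e'_{k\ihom}$. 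Hence, by Proposition~\ref{prop detect}, each $A' e_{i\ihom}$ is a strongly indecomposable left $A'$-module and each $A' e'_{k\ihom}$ is an indecomposable left $A'$-module.

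Next I would supply the finiteness inputs, which are exactly those of the proof of Theorem~\ref{thm:KSAprod} with source and target interchanged. View $A' e_{i\ihom}$ as an $R$-submodule of $\Hom_R(\pi_*(X_i), \pi_*(X'))$ via $f_\ihom e_{i\ihom} \mapsto f_\ihom j_{i\ihom}$. Because $X_i$ is homotopy-finite, $\pi_*(X_i)$ is concentrated in finitely many degrees, each finitely generated over $R$; because $\wprod_{i\in I} X_i$ is of finite type, every $\pi_k(X')$ is finitely generated over $R$; hence $\Hom_R(\pi_*(X_i), \pi_*(X'))$ is finitely generated over $R$, and, $R$ being noetherian, so is each $A' e_{i\ihom}$ and each $A' e'_{k\ihom}$. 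Using the identification $\pi_*(X') \cong \bigoplus_{i\in I}\pi_*(X_i)$ (valid since CW-complexes are $T_1$) together with the fact that a strongly indecomposable, resp. indecomposable, H-space may be assumed non-contractible, finiteness of type forces both $I$ and $K$ to be at most countable. Thus Proposition~\ref{prop:KSAprod}, applied to the $R$-algebra ${A'}^{\mathrm{op}}$ so that left $A'$-modules become right ${A'}^{\mathrm{op}}$-modules (the hypotheses on endomorphism rings and on finite generation over $R$ being unaffected), yields a bijection $\varphi\colon I\to K$ with $A' e_{i\ihom} \cong A' e'_{\varphi(i)\ihom}$ as left $A'$-modules.

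It remains to establish the left-handed analogue of Lemma~\ref{lemma:tech}: for retracts $Z,Z'$ of $X'$ with associated idempotents $e_\ihom,e'_\ihom\in A'$ and $Z$ strongly indecomposable, $Z\simeq Z'$ if and only if $A' e_\ihom \cong A' e'_\ihom$ as left $A'$-modules. Given a homotopy equivalence $v\colon Z\to Z'$, put $a = (j'vp)_\ihom$ and $b = (jv^{-1}p')_\ihom$; then $ba = e_\ihom$, $ab = e'_\ihom$, and right multiplication by $b$ (resp. by $a$) is a left $A'$-module homomorphism carrying $A' e_\ihom$ isomorphically onto $A' e'_\ihom$ (resp. back). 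Conversely, if $A' e_\ihom \cong A' e'_\ihom$, then $e_\ihom A' e_\ihom \cong \im\beta_Z$ is local, so by the left-module version of~\cite[exercise 21.16]{Lam} the idempotents are conjugate, $e'_\ihom = u_\ihom e_\ihom u^{-1}_\ihom$ for some $u_\ihom\in U(A')$; choosing $u\in\End(X')$ with $\beta_{X'}(u) = u_\ihom$ (a homotopy equivalence, since $\beta_{X'}$ is unit-reflecting), the maps $g = p u^{-1} j'\colon Z'\to Z$ and $h = p' u j\colon Z\to Z'$ induce mutually inverse isomorphisms on all homotopy groups, whence $Z\simeq Z'$ by the Whitehead theorem. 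Applying this with $Z = X_i$ and $Z' = X'_{\varphi(i)}$ gives $X_i\simeq X'_{\varphi(i)}$ for all $i$. The only point requiring genuine care — everything else being bookkeeping — is checking that each right-module ingredient borrowed from the proof of Theorem~\ref{thm:KSAprod} (Proposition~\ref{prop detect}, Lemma~\ref{lemma:tech}, Proposition~\ref{prop:KSAprod}, and the idempotent-conjugacy statement) has a bona fide left-module version; in each case this is either a direct mirror image of the original argument or follows by passing to the opposite ring.
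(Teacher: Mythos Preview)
Your proposal is correct and follows exactly the approach indicated in the paper: the authors deliberately omit the proof, noting only that it ``uses the same argument as the proof of theorem~\ref{thm:KSAprod} with the left $A'$-module $A'$ in place of the right $A$-module $A$,'' and you have carried this out in detail, including the left-handed analogues of Lemma~\ref{lemma:tech} and the finiteness verification. In fact you supply more detail than the paper does (the passage to ${A'}^{\mathrm{op}}$ to invoke Proposition~\ref{prop:KSAprod}, the explicit anti-isomorphism $\End_{A'}(A'e_{i\ihom})\cong (e_{i\ihom}A'e_{i\ihom})^{\mathrm{op}}$), all of which is sound.
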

	
	Of course, the above uniqueness theorems say nothing about the existence of factorizations of H-spaces as products of strongly 
	indecomposable spaces. 	
	For example if $X$ is an H-space having the homotopy type of a finite CW-complex, then one can decompose $X$ as 
	a product of indecomposable factors
	but these factors will rarely be strongly indecomposable (unless $\End(X)$ is finite). This is reflected in the well-known phenomenon 
	that finite H-spaces 
	often admit non-equivalent product decompositions. The situation becomes more favorable if we consider $p$-localizations of  
	H-spaces. In the following section we are going to 
	show that a $p$-local finite H-space is indecomposable if and only if it is strongly indecomposable. 
	A factorization of an H-space as a product of such spaces
	is therefore unique. Finally, if we consider $p$-complete H-spaces then even the finite-dimensionality  assumption may be dropped. 
	In fact, Adams and Kuhn~\cite{Adams-Kuhn} 
	have proved that every indecomposable $p$-complete H-space of finite type is \emph{atomic}, which in particular implies, that it is 
	strongly indecomposable. 
	Theorem~\ref{thm:KSAfin} implies that decompositions into finite products of $p$-complete atomic 
	spaces are unique. For an alternative approach that works for spaces of finite type see \cite[corollaries 1.4 and 1.5]{Gray} or 
	\cite[theorem 4.2.14]{Xu}.

\section{Homotopy endomorphisms of $p$-local spaces}
\label{sect:p-local}
	In this section we consider $p$-local H-spaces and show that under suitable finiteness assumptions the indecomposability of 
	a space implies strong indecomposability. 	
	The proof is an interesting blend of topology and algebra, since it uses non-trivial results from homotopy theory, 
	the theory of local rings and the theory of loop near-rings. 
	Let us say that an $R$-local H-space $X$ is \emph{finite} if it is finite-dimensional, and if its homotopy groups are 
	finitely generated $R$-modules for some subring $R \le \QQ$. For every finite  H-space $X$ we can define the homomorphism 	
	$$	\bar\beta_X\colon \End(X)\to  \prod_{k=1}^{\dim X} \End(\pi_k(X)), \quad f\mapsto (\pi_1(f), \pi_2(f), 
	\ldots, \pi_{\dim X}(f)) \textrm{. }$$
	We will show that---when applied to a finite H-space $X$---the homomorphism $\bar\beta_X$ retains the same main features of the 
	homomorphism $\beta_X$ as described in proposition \ref{prop beta}, 
	while it has a great advantage over the latter because it maps into the ring of endomorphisms of a finitely generated module.
	 
	\begin{proposition}
		\label{prop:findim}
		If $X$ is a finite H-space then the homomorphism $\bar\beta_X$ is unit-reflecting and idempotent-lifting.
	\end{proposition}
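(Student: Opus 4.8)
The plan is to prove the two assertions separately, following the scheme of Proposition~\ref{prop beta} closely and using the finiteness of $X$ to compensate for the fact that $\bar\beta_X$ only records the homotopy groups in degrees $\le\dim X=:n$. (If $X\simeq *$ there is nothing to prove, so assume $n\ge 1$.)

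\textbf{Unit-reflecting.} Suppose $\bar\beta_X(f)$ is a unit, i.e.\ $\pi_k(f)$ is an isomorphism for every $k\le n$; we must show $f$ is a homotopy equivalence. Since $\pi_1(f)$ is an isomorphism, it is enough to show that the induced self-map $\tilde f$ of the universal cover $\tilde X$ is a homotopy equivalence. Now $\tilde X$ is a simply-connected, $R$-local CW-complex, still of dimension $n$ and with finitely generated homotopy groups over $R$, and $\pi_k(\tilde f)=\pi_k(f)$ is an isomorphism for $2\le k\le n$. Turning $\tilde f$ into a cofibration via the mapping cylinder and applying the relative Hurewicz theorem shows that $H_k(\tilde f;\ZZ)$ is an isomorphism for $k<n$ and an epimorphism for $k=n$. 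As $\tilde X$ is simply connected with finitely generated homotopy over the noetherian ring $R$, each $H_k(\tilde X;\ZZ)$ is a finitely generated $R$-module (mod-$\mathcal{C}$ Hurewicz); a surjective endomorphism of such a module is an isomorphism, so $H_n(\tilde f)$ is an isomorphism as well, while $H_k(\tilde X)=0$ for $k>n$. Hence $\tilde f$ induces isomorphisms on all integral homology groups, so it is a homotopy equivalence by Whitehead's theorem, and $f$ is therefore a weak equivalence — hence a homotopy equivalence, $X$ being a CW-complex.

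\textbf{Idempotent-lifting.} We reduce to Proposition~\ref{prop beta}. Write $A:=\im\beta_X$ and $\bar A:=\im\bar\beta_X$; truncation gives a surjective ring homomorphism $t\colon A\to\bar A$ with $\bar\beta_X=t\circ\beta_X$, and it suffices to produce an idempotent $\varepsilon\in A$ with $t(\varepsilon)=\bar\beta_X(f)$: then, $\beta_X$ being idempotent-lifting, some idempotent $e\in\End(X)$ satisfies $\beta_X(e)=\varepsilon$, whence $\bar\beta_X(e)=\bar\beta_X(f)$. Here is where finiteness enters: by Hopf's theorem the rational homotopy of $X$ is concentrated in degrees $\le n$, so $\pi_k(X)$ is a finitely generated torsion $R$-module — hence \emph{finite} — for every $k>n$, and consequently $\prod_{k>n}\End(\pi_k(X))$ is a compact product of finite rings. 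Given $f$ with $\bar\beta_X(f)$ idempotent, the elements $\beta_X(f^{m!})\in A$ satisfy $t(\beta_X(f^{m!}))=\bar\beta_X(f)^{m!}=\bar\beta_X(f)$ for every $m$, and in each coordinate $k>n$ the powers $\pi_k(f)^{m!}$ stabilise (for $m$ large) to an idempotent of the finite ring $\End(\pi_k(X))$; thus $\beta_X(f^{m!})$ converges coordinatewise to an idempotent $\varepsilon$ of $\prod_k\End(\pi_k(X))$ with $t(\varepsilon)=\bar\beta_X(f)$. Provided $A$ is closed in $\prod_k\End(\pi_k(X))$, this forces $\varepsilon\in A$, and idempotent-lifting follows.

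\textbf{Main obstacle.} The step I expect to cost the most — and the only one where finite-dimensionality is genuinely indispensable beyond the bound on rational homotopy — is the closedness of $\im\beta_X$ in $\prod_k\End(\pi_k(X))$; concretely, since $\ker t$ consists of the classes $\beta_X(h)$ of self-maps $h$ with $\pi_k(h)=0$ for all $k\le n$, one needs this set of self-maps to be compact (say, finite) inside $\prod_{k>n}\End(\pi_k(X))$. This should be obtained from the identification $[X,X]\cong[X,X_n]$ with the $n$-th Postnikov section (valid because $\dim X\le n$) together with an obstruction-theoretic count: the self-maps inducing zero on all homotopy groups are assembled from the successive obstruction groups $H^{j}(X;\pi_j(X))$ with $j\le n$, and once the Hurewicz-detectable $\Hom$-parts are killed what remains are the torsion $\Ext$-parts, which are finite — the finiteness of $\pi_k(X)$ for $k>n$ again being what keeps the relevant $\Hom$-groups finite in the edge degree. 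Making this precise is the technical heart; the rest is a routine transcription of the proof of Proposition~\ref{prop beta} with Whitehead's theorem replaced by its finite-dimensional, noetherian refinement used above.
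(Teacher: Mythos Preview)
Your unit-reflecting argument is correct and in fact more careful than the paper's one-line appeal to the Whitehead theorem; the Noetherian trick (surjective endomorphism of a finitely generated $R$-module is an isomorphism) is exactly the right way to upgrade the $H_n$-surjection to an isomorphism.

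For idempotent-lifting, however, your route diverges from the paper's and carries a real gap. You try to factor through $\beta_X$: given $\bar\beta_X(f)$ idempotent, you form the sequence $\beta_X(f^{m!})$, observe that it converges coordinatewise to an idempotent $\varepsilon$, and then want $\varepsilon\in\im\beta_X$. As you yourself flag, this requires $\im\beta_X$ to be closed in $\prod_k\End(\pi_k X)$---or, what would suffice here since all the $f^{m!}$ lie in a single fibre of $\bar\beta_X$, that this fibre be finite. But your sketch for this (``obstruction-theoretic count'', torsion $\Ext$-parts) does not actually establish that the set of self-maps of $X$ inducing zero on $\pi_k$ for all $k\le n$ is finite. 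Finiteness of $\ker t$ inside $A$ is not the same as closedness of $A$, and in any case the obstruction groups $H^j(X;\pi_j X)$ you invoke need not be finite (they contain the $\Hom$-parts you yourself mention). The claim may be salvageable for finite H-spaces, but it is a separate nontrivial argument that you have not supplied.

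The paper sidesteps this entirely with a sharper observation: one does \emph{not} need $\beta_X(f^N)$ to be idempotent in all degrees, only in degrees $k\le 2(\dim X+1)$. Since $\pi_k(X)$ is finite for each $k>\dim X$ (rational ellipticity of finite H-spaces) and only finitely many degrees are in play, a single exponent $N$ achieves this. Then the comparison map $X\to\Tel(f^N)\times\Tel(f^N\ldiv\id_X)$ induces isomorphisms on $\pi_k$ for $k\le 2(\dim X+1)$, which is the dimension of the target (each telescope has dimension $\dim X+1$), so the finite-dimensional Whitehead theorem applies directly and produces the splitting---hence the required idempotent $e$---without any appeal to Proposition~\ref{prop beta} or to closedness of $\im\beta_X$. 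This is both shorter and conceptually cleaner than the detour you propose.
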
	
	\begin{proof}
		Reflection of units follows from the Whitehead theorem, so it only remains to prove that $\bar\beta_X$ is idempotent lifting.

		First observe that finite H-spaces are rationally elliptic, i.e. $X$ is rationally equivalent to a finite product of 
		Eilenberg--MacLane spaces; 
		$X_\QQ \simeq K(\QQ, n_1) \times \cdots \times K(\QQ, n_t)$, see~\cite[section 4.4]{Zabrodsky}.  It follows that for all 
		$k > \dim X$ the groups $\pi_k(X)$ are torsion and hence finite. 
		
		Let a map 
		$f \colon X \to X$ be such that $\pi_k(f) = \pi_k(f)^2 \colon \pi_k(X) \to \pi_k(X)$ for all $k \le \dim X$, i.e. 
		$\bar\beta_X(f)$ is an idempotent in $\im \bar\beta_X$. As the groups $\pi_k(X)$ are finite for $k > \dim X$, there is an 
		integer $n$  such that the $n$-fold composite $f^n \colon X \to X$ induces an idempotent endomorphism 
		$\pi_k(f)^n \colon \pi_k(X) \to \pi_k(X)$ for all $k \le 2(\dim X+1)$. 
		If we set $\bar{f}:= f^n \ldiv \id_X$ in the loop near-ring $\End(X)$, then $\pi_k(\bar{f}) = \id_{\pi_k(X)} - \pi_k(f)^n$ is 
		an idempotent endomorphism of $\pi_k(X)$ for all $k \le 2(\dim X+1)$. It follows that the map
		\[
			X \xrightarrow{\Delta} X \times X \hookrightarrow \Tel(f^n) \times \Tel(\bar{f})
		\]
		induces an isomorphism
		\[
			\pi_k(X) \to \im \pi_k(f)^n\oplus \im \pi_k(\bar{f})
		\]
		for all $k \le 2(\dim X+1)=\dim(\Tel(f^n) \times \Tel(\bar{f}))$. Hence, $X \simeq \Tel(f^n)\times \Tel(\bar{f})$ by the 
		Whitehead theorem. 
		This product decomposition determines the idempotent $e\colon X\to \Tel(f^n)\to X$ that satisfies 
		$\pi_k(e) = \pi_k(f)^n = \pi_k(f)$ for all $k \le \dim X$. 
		In other words $\bar\beta_X(e)=\bar\beta_X(f)$, therefore $e$ is an idempotent in $\End(X)$ that lifts $\bar\beta_X(f)$. 
	\end{proof}

	We have now prepared all the ingredients needed for the proof of the main result of this section.
 
	\begin{theorem}
		\label{thm fin}		
		Every indecomposable finite $p$-local H-space is strongly indecomposable.
	\end{theorem}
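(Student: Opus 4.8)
The plan is to reduce the statement to a purely ring-theoretic fact about the image of $\bar\beta_X$, exactly as in the passage from Proposition~\ref{prop beta} to Proposition~\ref{prop detect}, but now using the finite-dimensional substitute $\bar\beta_X$ from Proposition~\ref{prop:findim}. First I would set $\bar{A} := \im \bar\beta_X \subseteq \prod_{k=1}^{\dim X} \End(\pi_k(X))$ and observe that since $X$ is $p$-local and finite, $\bar{A}$ is a subring of a ring that is finitely generated as a $\plocal$-module; hence $\bar{A}$ itself is a module-finite $\plocal$-algebra. The key algebraic input is then that a module-finite algebra over the local ring $\plocal$ is semiperfect — equivalently, it is semilocal and idempotents lift modulo the Jacobson radical (see~\cite[example 23.3]{Lam} and the surrounding discussion, cf. the argument in \cite[theorem 2.1]{Pav}). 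A semiperfect ring with no nontrivial idempotents is local, so it suffices to show $\bar{A}$ has no nontrivial idempotents.

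The second step is to upgrade indecomposability of $X$ to absence of nontrivial idempotents in $\bar{A}$. Since $\bar\beta_X$ is unit-reflecting and idempotent-lifting by Proposition~\ref{prop:findim}, any idempotent $\bar\beta_X(f) = e_\ihom \in \bar{A}$ lifts to an idempotent $e \in \End(X)$ with $\bar\beta_X(e) = e_\ihom$; because $X$ is indecomposable, $e \in \{\0_X, \id_X\}$, so $e_\ihom \in \{0,1\}$. (One must be slightly careful: indecomposability of $X$ refers to $\beta_X$ rather than $\bar\beta_X$, but Lemma~\ref{non-trivial idempotent} and the Whitehead-theorem reasoning show that an idempotent $e \in \End(X)$ with $\bar\beta_X(e)=0$ already has $\beta_X(e)=0$ — indeed on the finite torsion groups $\pi_k(X)$, $k > \dim X$, one passes to a power $e^n$ that is a genuine idempotent there, as in the proof of Proposition~\ref{prop:findim}, and then $e^n$ is trivial; retracing the telescope decomposition forces $e$ itself to be trivial.) Thus $\bar{A}$ contains only the trivial idempotents.

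Combining the two steps: $\bar{A}$ is semiperfect with no nontrivial idempotents, hence local. Now I invoke Theorem~\ref{thm:local_lnr}(iii): the homomorphism $\bar\beta_X \colon \End(X) \to \bar{A}$ is non-trivial and unit-reflecting, and its codomain is a local ring, so $\End(X)$ is a local loop near-ring. By definition this says $X$ is strongly indecomposable, which completes the proof.

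I expect the main obstacle to be the bookkeeping in the second step — namely verifying that indecomposability in the sense of $\beta_X$ (which sees all homotopy groups) really does prohibit nontrivial idempotents in $\bar{A}$ (which only sees degrees $\le \dim X$). The clean way around this is to note that the telescope construction underlying Proposition~\ref{prop:findim} produces, from any idempotent in $\bar{A}$, an honest product decomposition $X \simeq \Tel(f^n) \times \Tel(f^n \ldiv \id_X)$ of the whole space; if $X$ is indecomposable one of these factors is contractible, forcing the original idempotent in $\bar{A}$ to be $0$ or $1$. So in fact one never needs to compare $\beta_X$ and $\bar\beta_X$ directly — the geometric decomposition does the comparison for us — and once that is observed the argument is short.
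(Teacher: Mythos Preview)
Your overall strategy matches the paper's: reduce to showing $\bar A := \im\bar\beta_X$ is a local ring, use Proposition~\ref{prop:findim} together with indecomposability of $X$ to see that $\bar A$ has no nontrivial idempotents, and then argue that $\bar A$ is semiperfect, hence local. The second step is fine (and your worry about $\beta_X$ versus $\bar\beta_X$ is unnecessary: indecomposability of $X$ literally says the only idempotents in $\End(X)$ are $\0_X$ and $\id_X$, so a lifted idempotent $e$ is already $0$ or $1$, whence $\bar\beta_X(e)\in\{0,1\}$).

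The genuine gap is your first step. You assert that a module-finite algebra over $\plocal$ is semiperfect, citing \cite[example~23.3]{Lam}. That example is about algebras over a \emph{complete} local ring; $\plocal$ is not complete, and the statement is false for it. A counterexample appears later in this very paper: $R=\plocal[\lambda]/(\lambda^2-\lambda+p)$ is a module-finite $\plocal$-algebra and an integral domain, so it has no nontrivial idempotents, yet $R/pR\cong\mathbb{F}_p\times\mathbb{F}_p$ and $pR=J(R)$, so $R/J(R)$ has a nontrivial idempotent that does not lift. Hence $R$ is not semiperfect. Your appeal to \cite[theorem~2.1]{Pav} does not help either: that result concerns the specific endomorphism rings of finite $p$-local spectra, not arbitrary module-finite $\plocal$-algebras.

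The paper closes exactly this gap with additional input: it shows (via \cite[lemma~3.2]{FraPav}) that $\bar A$ sits inside the ambient ring $E=\prod_{k\le\dim X}\End(\pi_k(X))$ with \emph{finite additive index}, and $E$ itself is semiperfect (as a finite product of endomorphism rings of finitely generated $\plocal$-modules). Then \cite[example~3.3]{Pav} is invoked to conclude that $J(\bar A)$ is idempotent-lifting. This finite-index step is the missing idea in your argument; without it, semiperfectness of $\bar A$ is simply not available.
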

	\begin{proof}
		To simplify the notation, let us denote by $E$ the ring $\prod_{k=1}^{\dim X} \End(\pi_k(X))$, by $A$ its subring $\im\bar{\beta}_X$, 
		and by $J = J(A)$ the Jacobson radical of $A$. 
		
		Theorem~\ref{thm local} says that in order to prove that $\End(X)$ is a local loop near-ring 
		it is sufficient to show that $A$ is a local ring. The ring $A$ is finitely generated as a $\plocal$-module, so 
		by~\cite[proposition 20.6]{Lam} the quotient $A/J$ is semisimple (i.e. a product of full-matrix rings over division
		rings). Therefore, we must prove that $A/J$ has only trivial idempotents, as this would imply that $A/J$ is a division ring, and 
		hence that $A$ is local. In fact, it is sufficient to prove that $J$ is an idempotent-lifting ideal because
		then every non-trivial idempotent in $A/J$ would lift to a non-trivial idempotent in $A$, and then along $\bar\beta_X$ 
		to a non-trivial idempotent in $\End(X)$, contradicting the indecomposability of $X$.
		
		That $J$ is idempotent-lifting is proved by the following argument. The ring $E$ is semiperfect 
		by~\cite[examples 23.2 and 23.4]{Lam} because it is a product of endomorphism rings of finitely generated $\ZZ_{(p)}$-modules. 
		By \cite[lemma 3.2]{FraPav} $A$ is a subring of finite additive index in $E$, and so by \cite[example 3.3]{Pav} the radical
		$J$ is idempotent-lifting, which concludes the proof. 
	\end{proof}
	
	Let us remark that if $X$ is a $p$-local H-space whose graded homotopy group is a finitely generated $\ZZ_{(p)}$-module
	(i.e. $X$ is a homotopy-finite $p$-local H-space) then the above proof works with $\beta_X$ in place 
	of $\bar\beta_X$, and we obtain the following result as well.
	\begin{theorem}
		\label{thm finhmtp}
		Let $X$ be a $p$-local H-space such that its graded homotopy group is a finitely generated $\ZZ_{(p)}$-module. Then $X$
		is indecomposable if and only if it is strongly indecomposable. 
	\end{theorem}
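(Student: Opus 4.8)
The plan is to run the proof of Theorem~\ref{thm fin} almost verbatim, with $\beta_X$ in place of $\bar\beta_X$; the remark preceding the statement already announces this, so the task is to check that each step survives the substitution. The implication ``strongly indecomposable $\Rightarrow$ indecomposable'' is immediate from Theorem~\ref{thm local}(i), so the entire content is the converse. By Proposition~\ref{prop detect}(ii), it suffices to show that the ring $A := \im\beta_X$ is local whenever $X$ is indecomposable.

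First I would set up the ambient ring. Let $E := \prod_{k\ge 1}\End(\pi_k(X))$ be the codomain of $\beta_X$. Since $\pi_*(X)=\bigoplus_k \pi_k(X)$ is a finitely generated $\ZZ_{(p)}$-module, all but finitely many $\pi_k(X)$ vanish and each nonzero one is finitely generated over $\ZZ_{(p)}$; hence $E$ is in fact a \emph{finite} product of endomorphism rings of finitely generated modules over the commutative local noetherian ring $\ZZ_{(p)}$, and is therefore semiperfect by~\cite[examples 23.2 and 23.4]{Lam}. Moreover $A$, being a subring of $E$, is a $\ZZ_{(p)}$-submodule of a finitely generated module over the noetherian ring $\ZZ_{(p)}$, hence itself finitely generated over $\ZZ_{(p)}$; so by~\cite[proposition 20.6]{Lam} the quotient $A/J$ is semisimple, where $J := J(A)$ is the Jacobson radical. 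Thus it remains only to exclude nontrivial idempotents in $A/J$, for that would force $A/J$ to be a division ring and $A$ to be local.

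Next I would show that $J$ is idempotent-lifting exactly as in Theorem~\ref{thm fin}: by~\cite[lemma 3.2]{FraPav} the subring $A$ has finite additive index in the semiperfect ring $E$, and so~\cite[example 3.3]{Pav} gives that $J$ is an idempotent-lifting ideal of $A$. To conclude, suppose $A$ were not local. Then $A/J$ would contain a nontrivial idempotent, which would lift first to a nontrivial idempotent of $A$, and then, since $\beta_X$ is idempotent-lifting by Proposition~\ref{prop beta}, to a nontrivial idempotent of $\End(X)$ — contradicting the indecomposability of $X$. Hence $A$ is local, and $X$ is strongly indecomposable by Proposition~\ref{prop detect}(ii).

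The argument is mostly bookkeeping once Theorem~\ref{thm fin} is available; the one point that genuinely needs attention is the reduction showing that $E$ is a \emph{finite} product of endomorphism rings of finitely generated $\ZZ_{(p)}$-modules and, consequently, that $A$ is finitely generated over $\ZZ_{(p)}$, since everything downstream — semiperfectness of $E$, semisimplicity of $A/J$, and the applicability of~\cite[lemma 3.2]{FraPav} and~\cite[example 3.3]{Pav} — rests on it. This is precisely where the hypothesis that the graded homotopy group is finitely generated over $\ZZ_{(p)}$ enters, taking over the role played by finite-dimensionality (together with Proposition~\ref{prop:findim}) in Theorem~\ref{thm fin}; here the plain $\beta_X$ and Proposition~\ref{prop beta} suffice, with no need to pass to a truncation.
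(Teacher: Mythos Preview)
Your proposal is correct and follows precisely the approach indicated by the paper: the paper's ``proof'' of Theorem~\ref{thm finhmtp} is literally the remark that the proof of Theorem~\ref{thm fin} goes through with $\beta_X$ in place of $\bar\beta_X$, and you have carried this out in detail, correctly isolating the one new point --- that finite generation of $\pi_*(X)$ over $\ZZ_{(p)}$ makes $E$ a finite product of endomorphism rings of finitely generated $\ZZ_{(p)}$-modules, so that the semiperfectness of $E$, the finite generation of $A$, and the finite-index and idempotent-lifting arguments all go through unchanged.
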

	\begin{remark}\label{rem:dual2}
		In case of simply-connected $p$-local coH-spaces (or $p$-local connective CW-spectra) $X$ there is no distinction between {\em finite} and 
		{\em homology finite} (at least up to homotopy equivalence). Theorems \ref{thm fin} and \ref{thm finhmtp} are therefore replaced by one 
		dual theorem. In the proof of theorem~\ref{thm fin} we simply replace the homomorphism $\bar\beta_X$ with $\alpha_X$ without any 
		additional complications. No dual of proposition~\ref{prop:findim} is needed.
	\end{remark}
	
	Observe that the two versions of the  theorem of Wilkerson~\cite{Wilkerson} on the unique factorization of $p$-local H-spaces now follow as
	easy corollaries. In fact, every $p$-local H-space of finite type that is either finite-dimensional or homotopy finite-dimensional
	admits a decomposition as a product of indecomposable factors. By theorems \ref{thm fin} and \ref{thm finhmtp} the factors are indeed strongly 
	indecomposable, so by theorem \ref{thm:KSAfin} the decomposition is unique.	
	\begin{example}
		One might wonder whether the theorems~\ref{thm fin} and~\ref{thm finhmtp} remain true if we replace {\em finite} by {\em finite type} 
		(i.e. $\pi_k(X)$ are finitely generated for all $k$). We know that at least in the case of CW-spectra they are false. Consider the example 
		given by Adams and Kuhn in~\cite[\S 4]{Adams-Kuhn}. They construct an indecomposable $p$-local spectrum $X$, such that the ring homomorphism 
		$H_0 \colon \End(X) \to \End(H_0(X))$ is unit-reflecting and its image is a ring isomorphic to 
		\[
			\frac{\plocal[\lambda]}{(\lambda^2-\lambda+p)} \textrm{. }
		\]
		The spectrum $X$ has $H_0(X) = \plocal \oplus \plocal$, so we can identify
		$\End(H_0(X))$ with $M_2(\plocal)$, the ring of $2 \times 2$ matrices with entries in 
		$\plocal$. The image of $\End(X)$ in this matrix ring is precisely the one-to-one image of 
		$\plocal[\lambda]/(\lambda^2-\lambda+p)$ under the ring homomorphism which maps a polynomial $q$ to the matrix $q(A)$, where 
		\[
			A = \begin{pmatrix} 0 & 1 \\ -p & 1 \end{pmatrix} \textrm{. }
		\] 
		(Note that $\lambda^2 - \lambda +p$ is the minimal polynomial of $A$.) Now, $A$ is not invertible, and neither is $I - A$, 
		so the ring $\plocal[\lambda]/(\lambda^2-\lambda+p)$ cannot be local. Hence, $X$ is an indecomposable $p$-local spectrum 
		of finite type, which is not strongly indecomposable. 
		
		Adams' and Kuhn's construction of the spectrum $X$ relies on the existence of certain elements in the stable homotopy groups of spheres 
		(in the image of $J$-homo\-mor\-phism) and cannot be directly applied to spaces. It remains an open question whether a similar example 
		exists in the realm of finite type H- or coH-spaces.
	\end{example}
\bibliographystyle{plain}
\bibliography{ref}
\end{document}